\documentclass{article}
\usepackage[colorlinks]{hyperref}
\usepackage{amsmath,amssymb}
\usepackage{physics}
\usepackage{subcaption}
\usepackage{mathtools}
\usepackage{parskip}
\usepackage{graphicx}
\usepackage{comment}
\usepackage{bm}
\usepackage{amsthm}
\usepackage{siunitx}
\usepackage{upgreek}
\usepackage{tikz}
\usetikzlibrary{calc}
\usepackage[toc, page, title]{appendix}
\usepackage{algorithm}
\usepackage{algpseudocode}
\usepackage{authblk}
\usepackage{braket}
\usepackage[pagewise]{lineno}% \linenumbers
\usetikzlibrary{patterns,arrows,decorations.pathreplacing,math,decorations.markings,angles}
\usetikzlibrary{arrows.meta, positioning}
\usepackage[margin=1.0in]{geometry}
\usepackage[backend=biber,style=numeric,sorting=none]{biblatex}
\newcommand{\Kn}{\mathrm{Kn}}
\newcommand{\QBGK}{\mathcal{Q}_{\text{BGK}}}
\newcommand{\QESBGK}{\mathcal{Q}_{\text{ESBGK}}}
\newcommand{\QpESBGK}{\mathcal{Q}_{\text{pESBGK}}}
\newcommand{\Ttensor}{\mathcal{T}}
\newcommand{\Stensor}{\Theta}
\newcommand{\nuESBGK}{\nu_{\text{ES}}}
\newcommand{\nupESBGK}{\nu_{\text{pES}}}
\newcommand{\vmax}{v_{\text{vmax}}}
\newcommand{\normal}{\hat{n}}
\newcommand{\vbar}{\underline{v}}
\DeclareMathOperator{\Div}{div}
\newtheorem{proposition}{Proposition}

\providecommand{\keywords}[1]
{
	\small	
	\textbf{Keywords. } #1
}

\newtheorem{remark}{Remark}

\title{A semi-Lagrangian method for the polyatomic ESBGK model}

\author[1, 2]{Klaas Willems\thanks{Corresponding author: klaas.willems@kuleuven.be}}
\author[3]{Erik Arlemark\thanks{erik.arlemark@asml.com}}
\author[1]{Giovanni Samaey\thanks{giovanni.samaey@kuleuven.be}}
\author[2]{Axel Klar\thanks{axel.klar@rptu.de}}
\affil[1]{Department of Computer Science, KU Leuven, Leuven, Belgium}
\affil[2]{Faculty of Mathematics, RPTU Kaiserslautern-Landau, Kaiserslautern, Germany}
\affil[3]{ASML}

\begin{document}
\date{}
\maketitle

\begin{abstract}
	Polyatomic kinetic models are essential for accurately capturing the thermodynamic behavior of real gases, as internal energy modes significantly influence transport coefficients, relaxation processes, and non-equilibrium effects that cannot be represented by monoatomic models. The polyatomic ESBGK model describes molecular collisions as a relaxation towards a generalized Gaussian distribution with an anisotropic covariance matrix and an exponentially decaying internal energy distribution. We present a new semi-Lagrangian scheme for the polyatomic Ellipsoidal Statistical BGK (ESBGK) model of the Boltzmann equation. The semi-Lagrangian framework, being deterministic and grid-based, removes the time-step restriction associated with the linear transport term by following the method of characteristics. The potentially stiff relaxation term is treated using an implicit A-stable linear multistep method which, owing to the structure of the BGK operator, can be reformulated into a cheap time-stepping scheme. This yields a highly efficient and numerically stable method. The numerical method is asymptotic preserving and stiffly accurate, meaning the scheme asymptotically converges to a scheme for the Euler equations in the vanishing Knudsen limit. In addition, we prove that the first-order scheme, asymptotically converges to the compressible Navier-Stokes equation with correct transport coefficients. Finally, we propose inflow and outflow boundary conditions suitable for BGK-type kinetic equations. We perform simulations of the Fourier and Couette test case to compare the BGK model with Direct Simulation Monte Carlo (DSMC). To conclude, we demonstrate the method on a challenging orifice flow test case with moving boundaries. 
\end{abstract}
\keywords{Ellipsoidal Statistical BGK model, semi-Lagrangian scheme, polyatomic gas, kinetic theory of gases, asymptotic preserving}

\section{Introduction}

Rarefied gas dynamics concerns flows in which non‑equilibrium molecular effects play a significant role. The degree of rarefaction is characterized by the Knudsen number
\[
\Kn = \frac{\lambda}{L},
\]
where \(\lambda\) denotes the mean free path and \(L\) is a characteristic macroscopic length. When \(\Kn \ll 1\), particle collisions are frequent and continuum models such as the Euler or Navier–Stokes–Fourier equations accurately describe the evolution of density, momentum, and energy. For \(\Kn = \mathcal{O}(1)\), however, the continuum hypothesis breaks down and kinetic models become necessary. Such conditions arise naturally in applications including high‑altitude aerodynamics and micro‑electromechanical systems (MEMS) \cite{shen2005}, either due to low pressure (yielding a large mean free path \(\lambda\) ) or due to small geometric scales \(L\).

The Boltzmann equation provides a fundamental statistical description of rarefied gases, but its collision operator contains a five‑fold integral, making direct numerical solution expensive. For this reason, relaxation‑type kinetic models—beginning with the BGK model—have been developed to simplify computations while retaining key physical properties such as conservation laws, entropy dissipation, and correct asymptotic behavior. The ESBGK model extends BGK by enabling correct Prandtl number matching. Since many gases of practical interest are polyatomic, further extensions are required to incorporate internal energy and additional degrees of freedom.

A range of numerical methods exists for kinetic equations. The Direct Simulation Monte Carlo (DSMC) method \cite{bird2013} provides a robust particle‑based approximation of the Boltzmann equation, particularly in high‑Mach or highly non‑equilibrium regimes, but may suffer from statistical noise or high cost for low-speed or transient flows. Deterministic methods, known as discrete velocity methods, provide for accurate alternatives \cite{mieussensDiscreteVelocityModel2000}. They rely on a discretised velocity space, after which the advection and collision operators can be advanced separately using a time‑splitting procedure. For the Boltzmann equation, the collision step can be computed efficiently by exploiting the convolution structure of the collision operator \cite{pareschiFourierSpectralMethod1996}. In contrast, for BGK‑type models, the collision term is typically treated implicitly in time to circumvent the stiffness that arises in the small‑Knudsen‑number regime \cite{pareschiImplicitExplicitRunge2005}. This idea has been explored in the context of finite volume schemes \cite{boscheriHighOrderCentral2020, dechristeCartesianCutCell2016, xuUnifiedGaskineticScheme2010a}.

A particularly well-known deterministic numerical method for BGK-type equations is the so-called semi-Lagrangian method \cite{groppi2014}, which uses the method of characteristics to rewrite the equation as an ODE along characteristics. By tracing characteristics backwards to previous timesteps, a scheme is obtained that is not restricted by a CFL condition. The semi‑Lagrangian framework has been extended to conservative reconstructions \cite{choConservativeSemiLagrangianSchemes2021a, cho2021}, to the ESBGK model \cite{boscarino2025a}, to adaptive velocity grids \cite{brull2014, boscarinoLocalVelocityGrid2022}, and to BGK models for gas mixtures \cite{choConservativeSemiLagrangianSchemes2022}.

\textbf{Contributions.}
The goal of this work is to extend the semi‑Lagrangian methodology to the polyatomic ESBGK model. The main contributions are:

\begin{enumerate}
	\item A new semi‑Lagrangian discretisation of the polyatomic ESBGK model, including an explicit treatment of the collision operator. We follow the procedure from \cite{filbet2011}, in which higher-order moments of the local equilibrium distribution at the next time step are formulated explicitly as a function of moments of a known distribution at the previous time step. In contrast to \cite{filbet2011}, in the polyatomic setting, we obtain a nonlinear system of equations for two temperatures which can be solved using a Newton method. For the velocity covariance matrix, we obtain an explicit expression that simplifies to the case of \cite{filbet2011} for monoatomic particles. 	
	\item A proof that the first-order scheme converges to a consistent discretisation of the Navier–Stokes–Fourier equations with correct transport coefficients. We thereby generalize the proof from \cite{boscarino2025a} for the ESBGK model, to the polyatomic setting. The proof follows a Chapman--Enskog expansion of the numerical scheme, but is more involved due to the presence of additional additional higher-order moments of the distribution function.
	\item An implementation of physically relevant boundary conditions, including diffusive--reflective walls and pressure-driven inflow/outflow conditions. For the diffuse--reflective boundary conditions, we avoid the use of iterative procedures or extrapolation to the boundaries as in \cite{groppiBoundaryConditionsSemiLagrangian2016}. Instead, the boundary conditions are imposed directly after the reconstruction step, prior to the collision step, resulting in a simple and efficient explicit treatment. This approach is then naturally extended to pressure-driven inflow/outflow boundary conditions. To the best of our knowledge, such inflow/outflow boundary conditions for deterministic solvers of BGK-type equations have not yet been reported in the literature.
	\item A comprehensive comparison of BGK, ESBGK, polyatomic ESBGK, DSMC, and Navier–Stokes models for canonical test cases such as Fourier and Couette flow \cite{gallis2011}. This provides a consistent reference framework to assess how these models relate to each other and to benchmark results within the same numerical setting. We thereby aim to complement existing comparisons of polyatomic models using stochastic particle methods for more complex geometries \cite{andries2002, pfeiffer2019}. We also note that preliminary polyatomic Couette flow simulations using a UGKS scheme have recently been reported in \cite{barangerAdaptationUnifiedGasKinetic2026}. 	
	\item Simulations of stationary and moving orifice configurations, demonstrating the performance of the method in non-trivial geometries across continuum and transitional flow regimes. The results include comparisons with Navier--Stokes and DSMC reference data. The moving orifice case further illustrates the ability of the method to handle time-dependent geometries and capture complex flow behaviour such as expansion, recirculation, and shock structures.
\end{enumerate}

The remainder of this paper is organized as follows. Section~\ref{section:Background} presents the mathematical background, describing the Boltzmann equation, BGK, ESBGK, and polyatomic ESBGK models.
Section~\ref{section:SL} introduces the semi‑Lagrangian discretisation. First, we introduce the relevant notation in Section \ref{section:SLNotation}. Then, in Section \ref{section:SLFirstOrder}, we derive the first-order scheme. The implicit time integration of the polyatomic ESBGK collision operator is complicated by its dependence on higher-order moments of the distribution function, such as the internal temperature. To address this, we express these moments in terms of known quantities from previous time steps, which yields nonlinear equations for the two temperatures as well as an explicit expression for the velocity covariance matrix. The extension of the scheme to higher-orders, and a model order reduction technique known as a Chu-reduction are then subsequently discussed in Sections \ref{section:SLSecondOrder} and \ref{section:Chu}. In Section \ref{section:BCs}, we present the diffuse-reflective, and inflow-outflow boundary conditions that can be used for all deterministic methods for BGK-type equations. We then prove in Section \ref{section:AP}, as a generalization of the monoatomic case, that the first-order semi-Lagrangian scheme asymptotically converges to a consistent scheme for the Navier-Stokes equations. Section~\ref{section:Numerics} provides the Fourier and Couette flow test cases, as well as the orifice simulations with moving boundaries. Finally, in section \ref{section:Conclusion}, the conclusion is presented.

\section{Mathematical Background}
\label{section:Background}
This section outlines the kinetic models that form the basis of the polyatomic ESBGK formulation used in this study.

\subsection{The Boltzmann Equation}
\label{section:boltzmann}
The foundational kinetic model for rarefied gases is the Boltzmann equation \cite{cercignaniBoltzmannEquationIts1988}, \begin{align}
	\pdv{f}{t} + v \cdot \nabla_x f = \mathcal{Q}(f), \label{eq:BoltzmannEquation}
\end{align} which governs the evolution of the distribution \(f(t, x, v)\) of monoatomic particles with velocity \(v \in \mathbb{R}^3\), through time \(t \in \mathbb{R}^+\) and space \(x \in \mathbb{R}^3\). The Boltzmann collision operator \(\mathcal{Q}(f)\) is bilinear and contains a five-fold integral. The macroscopic quantities such as the density \(\rho(t, x)\), bulk velocity \(U(t, x)\) and total energy \(E(t, x)\) are then obtained by taking moments of the particle distribution, \begin{align}
	\left( \rho(t, x), \rho(t, x) U(t, x), E(t, x) \right)^\top = \int_{\mathbb{R}^3} \phi(v) f(t, x, v) dv\quad \text{with}\, \phi(v) = \left( 1, v, \frac{1}{2} \abs*{v}^2 \right)^\top. \label{eq:collisionInvariants}
\end{align} The temperature of the gas is related to the total energy in the following way \begin{align}
	E(t, x) = \frac{1}{2} \rho(t, x) \abs{U(t, x)}^2 + \frac{3}{2} R_s T(t, x), \label{eq:totalEnergy}
\end{align} where \(R_s\) is the specific gas constant. The elements of \(\phi(v)\) \eqref{eq:collisionInvariants}, are referred to as collision invariants, because they are conserved by the Boltzmann collision operator in the sense that \begin{align}
	\int_{\mathbb{R}^3} \phi(v) \mathcal{Q}(f) dv = 0. \label{eq:collisionInvariant}
\end{align} Hydrodynamic equations, including the Euler and Navier–Stokes formulations, can be systematically obtained from the Boltzmann equation by applying the Chapman–Enskog expansion \cite{struchtrup2005}. This procedure involves expanding the distribution function in the Knudsen number, \(f = f^0 + \Kn f^1 + \Kn^2 f^2 + \cdots\). At the zero-th order, the Euler equations are obtained. The first-order approximation gives the Navier-Stokes equations \begin{equation}
	\begin{aligned}
		&\pdv{\rho}{t} + \Div_x \left(\rho U\right) = 0, \\
		&\pdv{\rho U}{t} + \Div_x \left(\rho U \otimes U + pI_3\right) = \Kn \Div_x \left[\mu \sigma(u)\right], \\
		&\pdv{E}{t} + \Div_x \left((E + p) U\right) = \Kn \Div_x \left(\mu \sigma(U) U + \kappa \nabla_x T\right),
	\end{aligned} \label{eq:NS}
\end{equation} with the pressure \(p = \rho R_s T\), the viscosity \(\mu = \mu(T)\), the heat conduction \(\kappa\), and the stress tensor \(\sigma(U)\). An important quantity, the Prandtl number, \begin{align}
	\Pr = \frac{5}{2} \frac{R_s \mu}{\kappa}, \label{eq:Pr}
\end{align} quantifies the main mode of diffusion. For \(\Pr \ll 1\), thermal diffusion dominates, whereas for \(\Pr \gg 1\), momentum diffusion dominates. Monoatomic gases typically have a Prandtl number of \(\frac{2}{3}\). An important requirement for kinetic models is their ability to reproduce this physically correct Prandtl number. 
A final property of the Boltzmann equation, is the so-called H-theorem. For space-homogeneous functions, we define the negative entropy \begin{align}
	\mathcal{H}(t) = \int_{\mathbb{R}^3} f \log f dv.
\end{align} It can be shown that solutions to the space-homogeneous Boltzmann equation satisfy the so-called H-theorem \begin{align}
	\dv{\mathcal{H}}{t} = \int_{\mathbb{R}^3} \mathcal{Q}(f) \log f dv \leq 0. \label{eq:HTheorem}
\end{align} Since the quantity \(f \log f\) is bounded from below, and \(\mathcal{H}(t, x)\) must decrease, the Boltzmann equation describes the evolution towards a state of minimum \(\mathcal{H}\), known as thermodynamic equilibrium. At thermodynamic equilibrium, the distribution \(f(t, x, v)\) is the Maxwellian distribution \begin{align}
	\mathcal{M}(t, x, v) = \frac{\rho(t, x)}{\left( 2\pi R_s T(t, x) \right)^{3/2}} \exp \left( -\frac{\abs{v - U(t,x)}^2}{2 R_s T(t, x)} \right).
\end{align}
\subsection{The Classical BGK Model}
\label{section:BGK}
The observation that the distribution \(f\) tends to a Maxwellian distribution is the main motivation for BGK-type models, where the Boltzmann collision operator is replaced by a relaxation operator \begin{align}
	\QBGK(f) = \frac{1}{\tau} \left(\mathcal{M}(t, x, v) - f(t, x, v)\right), \label{eq:ClassicBGK}
\end{align} with \(\tau = \mu(T)/p\) the mean free time. Due to the definition of the density, mean velocity \eqref{eq:collisionInvariants} and temperature \eqref{eq:totalEnergy}, the BGK equation is highly nonlinear. Nevertheless, due of the specific structure of its relaxation operator, the model admits to efficient numerical schemes and retains many essential properties of the Boltzmann equation. Analogous to the Boltzmann equation \eqref{eq:BoltzmannEquation}, the BGK model conserves mass, momentum and energy. Furthermore, a H-theorem for the BGK equation can be proven, \begin{align}
	\dv{\mathcal{H}}{t} &= \dv{}{t} \int_{\mathbb{R}^3} \QBGK(f) \left(\log f + \log \mathcal{M} - \log \mathcal{M} \right) dv 
	= -\dv{}{t} \int_{\mathbb{R}^3} \QBGK(f) \left(\log \mathcal{M} - \log f \right) dv + 0 \leq 0,
\end{align} by adding a zero under the integral and using the conservation of mass property. The main deficit of the BGK model is that it fails to reproduce the correct Prandtl number. Indeed, a Chapman-Enskog expansion of the BGK model to first-order yields a Prandtl number of 1.

\subsection{The ESBGK Model for Monoatomic Gases}
\label{section:ESBGK}
To treat the issue of the BGK model related to the Prandtl number, several solutions exist. In \cite{mieussens2004}, a velocity-dependent collision frequency is considered. Alternatively, a correct Prandtl number can also be obtained by correcting thermodynamic equilibrium in \eqref{eq:ClassicBGK} with a term proportional to the heat flux. This model is known as the Shakhov model \cite{shakhov1972}. However, in this text, we consider the generalized BGK model that was introduced in \cite{holway1965, brull2008}, known as the Ellipsoidal Statistical or Gaussian BGK model (ESBGK). This collision model, which is still a relaxation, replaces the local Maxwellian in the BGK model \eqref{eq:ClassicBGK} with an anisotropic Gaussian, \begin{align}
	\mathcal{G}(t, x, v) = \frac{\rho}{\sqrt{\det(2\pi \Ttensor)}} \exp \left( - \frac{1}{2} \left(v - U(t, x)\right)^\top \Ttensor^{-1} \left(v - U(t, x)\right) \right), 
\end{align} with covariance matrix \(\Ttensor\) and stress tensor \(\Stensor\) \begin{align}
	\Ttensor &= (1 - \nuESBGK) R_s T I_3 + \nuESBGK \Stensor, \\
	\rho(t, x) \Stensor &= \int_{\mathbb{R}^3} (v - U(t, x)) \otimes (v - U(t, x)) f(t, x, v) dv,
\end{align} and free parameter \(-0.5 \leq \nuESBGK < 1\). The collision operator now becomes \begin{align}
	\QESBGK(f) = \frac{1}{\tau (1 - \nuESBGK)} \left(\mathcal{G}(t, x, v) - f(t, x, v)\right). \label{eq:ESBGK}
\end{align} Note that for \(\nuESBGK = 0\), the ESBGK model \eqref{eq:ESBGK} reduces to the BGK model \eqref{eq:ClassicBGK}. The ESBGK model preserves the same collision invariants as the previous models, and the H-theorem was established in \cite{andries2001}. Moreover, a first-order Chapman-Enskog expansion yields a Prandtl number of \(\Pr = \frac{1}{1 - \nuESBGK}\), which for \(\nuESBGK = -0.5\), yields the physically-correct value of \(\Pr = 2/3\). Extensions of both the BGK and ESBGK model to gas mixtures have been developed, see \cite{brull2015, v.bobylev2018} and references therein. All models discussed so far are in principle only valid for monoatomic gases.

\subsection{Polyatomic ESBGK Model}
\label{section:polyatomicESBGK}
Since monoatomic gases constitute only a narrow subset of those encountered in practical applications, more general formulations applicable to polyatomic gases are required. In this work, we therefore consider the polyatomic extension of the ESBGK model from \cite{andriesGaussianBGKModelBoltzmann2000}. In the polyatomic case, the distribution function \(f(t, x, v, I)\) is extended to include the internal energy distribution \(I^{2/\delta}\) of gas particles, where \(\delta\) is the number of additional degrees of freedom. For example, for a diatomic gas, \(\delta = 2\). The collision invariants and associated macroscopic quantities \eqref{eq:collisionInvariants} are now defined as \begin{align}
	\rho(t, x) &= \left< \left< f(t, x, v, I) \right> \right> \\
	\rho(t, x) U(t, x) &= \left< \left< v f(t, x, v, I) \right> \right> \\
	E(t, x) &= \left< \left< \left(\frac{1}{2} \abs{v}^2 + I^{2/\delta} \right) f(t, x, v, I) \right> \right> = \frac{1}{2} \rho(t, x) \abs{U(t, x)}^2 + \rho(t, x) e(t, x), 
\end{align} under the new notation \begin{align}
	\left< \left< \bullet \right> \right> = \int_{v \in \mathbb{R}^3} \int_{I \in \mathbb{R}^+} \bullet \; dv dI. 
\end{align} The specific internal energy \(e(t, x)\) and associated equilibrium temperature \(T_{eq}\) consist of two parts, a translational energy mode, and a internal (rotational) energy mode \begin{equation}
	\begin{alignedat}{2}
		e(t, x)      & = e_{tr}(t, x) + e_{int}(t, x)    &\quad& = \frac{3 + \delta}{2}\, R_s\, T_{eq}(t, x), \\
		e_{tr}(t, x) & = \frac{1}{\rho(t, x)} \Big\langle \Big\langle
		\tfrac{1}{2}\lvert v - U(t,x)\rvert^2 f(t, x, v, I)
		\Big\rangle \Big\rangle             &\quad& = \frac{3}{2}\, R_s\, T_{tr}(t, x), \\
		e_{int}(t, x)& = \frac{1}{\rho(t, x)} \Big\langle \Big\langle
		I^{2/\delta} f(t, x, v, I)
		\Big\rangle \Big\rangle             &\quad& = \frac{\delta}{2}\, R_s\, T_{int}(t, x),
	\end{alignedat}
\end{equation} where the translational temperature \(T_{tr}\) is used to define the pressure \(p = \rho R_s T_{tr}\). The polyatomic model introduces two relaxation parameters, \(-0.5 \leq \nupESBGK < 1\), and \(0 \leq \theta \leq 1\), which relate to the relaxation temperature \(T_{rel}\) \begin{align}
	T_{rel}(t, x) = \theta T_{eq}(t, x) + (1 - \theta) T_{int}(t, x),
\end{align} and the covariance matrix \(\Ttensor\) \begin{align}
	\Ttensor = (1 - \theta) \left[ (1 - \nu)R_s T_{tr}(t, x) I_3 + \nu \Stensor \right] + \theta R_s T_{eq}(t, x) I_3. \label{eq:polyAtomicCovarianceMatrix}
\end{align} Using these definitions, we can define the polyatomic Maxwellian \(\mathcal{M}\) and Gaussian \(\mathcal{G}\) \begin{align}
	\mathcal{M}(t, x, v, I) &= \frac{\rho(t, x) \Lambda_\delta}{\left(2\pi R_s T_{eq}(t, x)\right)^{3/2}(R_s T_{eq})^{\delta/2}} \exp \left( -\frac{\abs{v - U(t,x)}^2}{2 R_s T_{eq}(t, x)} - \frac{I^{2/\delta}}{R_s T_{eq}(t, x)}  \right), \label{eq:Maxwellian} \\
	\mathcal{G}(t, x, v, I) &= \frac{\rho(t, x) \Lambda_\delta}{\sqrt{\det(2\pi \Ttensor)}(R_s T_{rel}(t, x))^{\delta/2}} \exp \left( -\frac{1}{2} \left(v - U(t, x)\right)^\top \Ttensor^{-1} \left(v - U(t, x)\right) - \frac{I^{2/\delta}}{R_s T_{rel}(t, x)} \label{eq:Gaussian} \right). 
\end{align} The constant \(\Lambda_\delta = \Gamma(\frac{\delta}{2} + 1)\) serves the purpose of normalising the term in the exponential related to the internal energy. Finally, we can define the ESBGK model for polyatomic gases \begin{align}
	\QpESBGK(f) = \frac{1}{\varepsilon} \left(\mathcal{G}(t, x, v, I) - f(t, x, v, I)\right),
\end{align} with \(\varepsilon = \mu \left(1 - \nupESBGK + \theta \nupESBGK\right)/p\). When the deriving the scheme, we allow the viscosity function to depend on the translational temperature \(\mu = \mu(T_{tr})\) as in \cite{andries2002}. Note that for \(\delta = 0\) and \(\theta = 0\), we have \(T_{eq} = T_{tr}\), and the polyatomic ESBGK model reduces to the monoatomic ESBGK model. 

The polyatomic ESBGK model conserves mass, momentum and energy, and admits to a H-theorem that was proven in \cite{andriesGaussianBGKModelBoltzmann2000}. As before, for vanishing Knudsen numbers, the model reduces to the Navier-Stokes-Fourier equations with correct Prandtl number. Moreover, the additional free parameter in the polyatomic model enables matching a further transport coefficient, known as the second viscosity.

\section{The semi-Lagrangian method and its properties}
\label{section:SL}
In this Section, we derive the semi-Lagrangian method for polyatomic ESBGK model. First, we introduce the notation. Then, in Section \ref{section:SLFirstOrder}, we derive the first-order method. In Section \ref{section:SLSecondOrder}, we extend the scheme to second-order in time and space using implicit linear multistep methods. In Section \ref{section:BCs}, we discuss how diffuse-reflective and pressure driven inflow and outflow boundary conditions are applied. Finally, in Section \ref{section:Chu}, we give a reduced order model of the polyatomic ESBGK equation for two-dimensional simulations in space.  

\subsection{Notation}
\label{section:SLNotation}
For simplicity, we consider a one-dimensional spatial domain \([x_L, x_R]\), with uniform grid spacing \(\Delta x = \frac{x_R - x_L}{N_x}\) such that \begin{align}
	x_i = x_L + \left(i + \frac{1}{2}\right) \Delta x, \quad i = 0 \ldots N_x - 1. 
\end{align} The velocity grid \(v \in \mathbb{R}^3\) is first truncated to \([-\vmax, \vmax]\) and subsequently discretised with mesh spacing \(\Delta v = \frac{2 \vmax}{N_v}\) in each direction: \begin{align}
	v_j = -\vmax \left(1, 1, 1\right)^\top + \left(j_1 \Delta v, j_2 \Delta v, j_3 \Delta v\right)^\top, \quad 0 \leq j_1, j_2, j_3 \leq N_v. 
\end{align} For more involved velocity discretisations, see \cite{brull2014, boscarinoLocalVelocityGrid2022}. Finally, we consider a fixed time step \(\Delta t\), such that \begin{align}
	t^n = n \Delta t, \quad n = 0 \ldots N_t, \quad N_t \Delta t = t_f.
\end{align} We keep the internal energy variable \(I \in \mathbb{R}^+\) continuous, but we omit it from the following sections, as it can be completely eliminated through a so-called Chu reduction \cite{chuKineticTheoreticDescriptionFormation1965, andriesGaussianBGKModelBoltzmann2000}. Further details are provided in Section \ref{section:Chu}. The numerical solution of \(f\) at \((t^n, x_i, v_j)\) is denoted by \(f_{i, j}^n\). Macroscopic quantities are approximated by discretising integrals over velocity space with the composite midpoint rule, e.g., \begin{align}
	\left(\rho_i^n, \rho_i^n U_i^n, E_i^n \right)^\top = \left< \sum_j \left((1, v_j, \frac{1}{2} \abs{v}^2) + I^{2/\delta}\right)^\top  f_{i, j}^n \Delta v^3 \right>_I, &\quad 
	\rho_i^n \Theta_i^n =  \sum_j (v_j - U_i^n) \otimes (v_j - U_i^n) \left< f_{i, j}^n \right>_I \Delta v^3 \\
	\Ttensor_i^n = (1 - \theta) \left[ (1 - \nu)R_s T_{tr, i}^n I_3 + \nu \Stensor_i^n \right] + \theta R_s T_{eq, i}^n I_3, &\quad \Sigma_i^n = \sum_j v_j \otimes v_j \left< f_{i, j}^n \right>_I \Delta v^3 \label{eq:covarianceMatrix} \\
	T_{tr, i}^n = \frac{2}{3 \rho_i^n R_s} \sum_j \frac{1}{2} \abs{v_j - U_i^n}^2 \left< f^n_{i, j} \right>_I \Delta v^3, &\quad T_{int, i}^n \frac{2}{\delta \rho_i^n R_s} \sum_j \left< I^{2/\delta} f^n_{i, j} \right>_I \Delta v^3 \\
	T_{eq, i}^n = \frac{1}{3 + \delta} \left(3 T_{tr, i}^n + \delta T_{int, i}^n \right), &\quad T_{rel, i}^n = \theta T_{eq}^n + (1 - \theta) T_{int, i}^n. \label{eq:temperatures2}
\end{align}
This procedure yields spectral accuracy in velocity, as long as the velocity \(\vmax\) is taken sufficiently large such that distribution \(f\) is small near the boundary of the velocity domain \cite{trefethen2014, groppi2014}. In the next section, we derive the first-order method. 

\subsection{The first-order method}
\label{section:SLFirstOrder}

\begin{figure}[htb]
	\centering
	\begin{tikzpicture}[scale=1.1, >=Stealth]
		
		% Axes
		\draw[->] (0,0) -- (6.5,0) node[right] {$x$};
		\draw[->] (0,-0.2) -- (0,2.5) node[above] {$t$};
		\draw[-] (0,1.5) -- (6,1.5);
		
		% Time levels
		\node[left] at (0,0) {$t^n$};
		\node[left] at (0,1.5) {$t^{n+1}$};
		
		% Spatial grid points
		\foreach \x/\lab in {1.2/$x_{i-2}$,2.4/$x_{i-1}$,3.6/$x_i$,4.8/$x_{i+1}$}
		{
			\fill (\x,0) circle (2pt);
			\fill (\x,1.5) circle (2pt);
			\node[below] at (\x,0) {\lab};
		}
		
		% Foot of characteristic
		\fill (1.7,0.0) circle (2pt);
		\node[below] at (1.7,0.0) {$\tilde{x}_i$};
		\node[above] at (1.7,0.0) {$\tilde f^n_{ij}$};
		
		% Point at next time
		\fill (3.6,1.5) circle (2pt);
		\node[above] at (3.6,1.5) {$f^{n+1}_{ij}$};
	
		% Characteristic line
		\draw[thick] (3.6,1.5) -- (1.7,0.0);
		\draw[dashed] (3.6,1.5) -- (3.6,0.4);
		\draw[dashed] (3.6,1.5) -- (4.8,0.4);
		\draw[dashed] (3.6,1.5) -- (2.9,0.4);
		\draw[dashed] (3.6,1.5) -- (4.3,0.4);
		
		% Velocity label
		\node[right] at (1.7,0.9) {$v_j>0$};
		
	\end{tikzpicture}
	\caption{The characteristic is traced back to the previous time step. The foot of the characteristic does not necessarily lie on the grid, thus interpolation is required \cite{groppi2014}.}
	\label{fig:SLCharacteristic}
\end{figure}
Using the method of characteristics, the kinetic equation \eqref{eq:BoltzmannEquation}, with polyatomic ESBGK collision operator \(\QpESBGK\), can be rewritten as an ordinary differential equation along characteristics \begin{align}
	\dv{f}{t} &= \frac{1}{\varepsilon} \left(\mathcal{G} - f\right), \quad f(0, x, v, I) = f^0(x, v, I), \label{eq:BGKODE} \\
	\dv{x}{t} &= v, \quad x(0) = \tilde{x}. \label{eq:characteristics}
\end{align} The characteristics, given by Equation \eqref{eq:characteristics}, define lines through space along which the distribution \(f\) only changes to collisions \eqref{eq:BGKODE}. To treat the potentially stiff collision operator, we use an implicit first-order method in time \begin{align}
	f^{n+1}_{i, j} = \tilde{f}^n_{i, j} + \frac{\Delta t}{\varepsilon^{n+1}_i}\left(\mathcal{G}^{n+1}_{i, j} - f^{n+1}_{i, j}\right), \label{eq:firstOrderImplicit}
\end{align} where \(\tilde{f}^n_{i, j}\) is the approximation of the solution at \(\tilde{x} = x_i - v_j \Delta t\) at time \(t^n\), see Figure \ref{fig:SLCharacteristic}. It is sufficient to use linear interpolation to obtain a first-order scheme. Equation \eqref{eq:firstOrderImplicit} constitutes a highly nonlinear system of equations in \(f_{i, j}^{n+1}\), due to the dependence of \(\mathcal{G}^{n+1}_{i, j}\) on \(\rho_i^{n+1}\), \(U_i^{n+1}\), \(\Ttensor_i^{n+1}\) and \(T_{rel, i}^{n+1}\). In addition, the relaxation time \(\varepsilon^{n+1}\) depends on the pressure \(p = \rho^{n+1}_i R_s T^{n+1}_{tr, i}\) and viscosity function \(\mu\left(T_{tr, i}^{n+1}\right)\). Nonetheless, using a trick analogous to \cite{boscarino2025a, filbet2011}, we show that implicit scheme can be evaluated cheaply without the need for a large nonlinear system solver. In the following three subsections, we progressively reduce the complexity of the nonlinear system. First, in Section~\ref{section:SL1}, we show that $\rho_i^{n+1}$ and $U_i^{n+1}$ can be computed explicitly. Next, in Section~\ref{section:SL2}, we derive a nonlinear system for $T_{tr,it}^{n+1}$ and $T_{int,it}^{n+1}$. Once these quantities are determined, the covariance matrix $\Ttensor_i^{n+1}$ can also be evaluated explicitly, 
as demonstrated in Section~\ref{section:SL3}.

\subsubsection{Explicit expressions for $\rho_i^{n+1}$ and $U_i^{n+1}$}
\label{section:SL1}
By multiplying Equation \eqref{eq:firstOrderImplicit} by the collision invariants \(\phi(v, I) = \left(1, v, \frac{1}{2} \abs{v}^2 + I^{2/\delta} \right)\), taking the summation over velocity index \(j\), and integrating over \(I\), we obtain \begin{align}
	\left< \sum_j \phi(v_j, I) \left(f^{n+1}_{i, j} - \tilde{f}^n_{i, j} \right) \Delta v^3 \right>_I = \frac{\Delta t}{\varepsilon^{n+1}_i} \left< \sum_j \phi(v_j, I) \left(\mathcal{G}^{n+1}_{i, j} - f^{n+1}_{i, j}\right) \Delta v^3 \right>_I. \label{eq:firstOrderImplicit1}
\end{align} Assuming the velocity domain \([-\vmax, \vmax]\) is chosen large enough, the right hand side in Equation \eqref{eq:firstOrderImplicit1} vanishes. Consequently, by conservation properties, we obtain, \begin{align}
	\rho_i^{n+1} = \tilde{\rho}_i^n, \quad U_i^{n+1} = \tilde{U}_i^n, \quad E_i^{n+1} = \tilde{E}_i^n, \label{eq:SLSchemeConservation} 
\end{align} where the tilde denotes macroscopic quantities derived from \(\tilde{f}\).

\subsubsection{Nonlinear system for the temperatures \(T_{tr,i}^{n+1}\) and \(T_{int,i}^{n+1}\)}
\label{section:SL2}
Next, we multiply Equation \eqref{eq:firstOrderImplicit} with \(\abs{v_j - U_i^{n+1}}^2\) and with \(I^{2/\delta}\). We integrate the resulting equations with respect to \(I\) and sum over the velocities to obtain, \begin{align}
	\frac{3}{2} \rho_i^{n+1} R_s \left[T_{tr, i}^{n+1} - \Tilde{T}_{tr, i}^{n+1}\right] &= \frac{\Delta t}{\varepsilon_i(T_{tr, i}^{n+1})} \left[\frac{3}{2}\rho_i^{n+1} R_s T_{eq, i}^{n+1} - \frac{3}{2}\rho_i^{n+1} R_s T_{tr, i}^{n+1} \right], \label{eq:SLTranslationalEnergy} \\
	 \frac{\delta}{2} \rho_i^{n+1} R_s \left[T_{int, i}^{n+1} - \Tilde{T}_{int, i}^{n+1}\right] &= \frac{\Delta t}{\varepsilon_i(T_{tr, i}^{n+1})} \left[\frac{\delta}{2} \rho_i^{n+1} R_s T_{rel, i}^{n+1} -\frac{\delta}{2} \rho_i^{n+1} R_s T_{int, i}^{n+1} \right] \label{eq:SLRotationalEnergy}
\end{align} Since the translational energy and rotational energy are not conserved, the right hand side in Equations \eqref{eq:SLTranslationalEnergy} and \eqref{eq:SLRotationalEnergy} do not vanish. Instead, after applying the definitions for \(T_{rel, i}^{n+1}\) and \(T_{eq, i}^{n+1}\), we obtain two coupled nonlinear equations for the translational and internal temperature, \begin{align}
	T_{tr, i}^{n+1} - \Tilde{T}_{tr, i}^n - \frac{\theta \Delta t}{\varepsilon^{n+1}_i(T_{tr, i}^{n+1})}\frac{\delta}{3 + \delta} \left[T_{int, i}^{n+1} - T_{tr, i}^{n+1}\right] = 0, \label{eq:SLTranslationalTemperature} \\
	T_{int, i}^{n+1} - \Tilde{T}_{int, i}^n - \frac{\theta \Delta t}{\varepsilon^{n+1}_i(T_{tr, i}^{n+1})}\frac{3}{3 + \delta} \left[T_{tr, i}^{n+1} - T_{int, i}^{n+1}\right] = 0. \label{eq:SLInternalTemperature}
\end{align} Equations \eqref{eq:SLTranslationalTemperature} and \eqref{eq:SLInternalTemperature} are solved using a Newton iteration. We note that this is a two-dimensional nonlinear system that must be solved at every spatial grid point, and is thus very cheap compared to the interpolations that are required to compute \(\Tilde{f}^n_{i}\). Given the temperatures \(T_{tr, i}^{n+1}\) and \(T_{int, i}^{n+1}\), the relaxation temperature \(T_{rel, i}^{n+1}\) and equilibrium temperature \(T_{eq, i}^{n+1}\) can be computed, see Equation \eqref{eq:temperatures2}.

\subsubsection{Explicit expression for the covariance matrix \(\mathcal{T}^{n+1}_i\)}
\label{section:SL3}
Next, we obtain an explicit expression for the covariance matrix \(\mathcal{T}^{n+1}_i\). First, we multiply Equation \eqref{eq:firstOrderImplicit} by \(v_i \otimes v_j\) and integrate \begin{align}
	\Sigma_i^{n+1} = \tilde{\Sigma_i^n} &+ \frac{\Delta t}{\varepsilon^{n+1}_i} \rho^{n+1}_i \left\{ \left(1-\theta\right) \left[(1-\nu)R_s T_{tr, i}^{n+1} I_3 + \nu \Stensor_i^{n+1}\right] + \theta R_s T_{eq, i}^{n+1} I_3 + U_i^{n+1} \otimes U_i^{n+1} \right\} -  \frac{\Delta t}{\varepsilon^{n+1}_i} \Sigma_i^{n+1} \\
	= \tilde{\Sigma_i^n} &+ \frac{R_s \Delta t}{\varepsilon^{n+1}_i} \rho_i^{n+1} \left\{(1-\theta)(1-\nu) R_s T_{tr, i}^{n+1} I_3 + \theta R_s T_{eq, i}^{n+1}I_3 + (1-(1-\theta)\nu) U_i^{n+1} \otimes U_i^{n+1} \right\} \label{eq:SigmaEquation} + \\ \quad \quad \quad &\Sigma_i^{n+1} \frac{\Delta t}{\varepsilon^{n+1}_i} ((1-\theta)\nu - 1), \nonumber
\end{align} where we have used the identity \begin{align}
	\rho_i^{n+1} \Stensor_i^{n+1} = \Sigma_i^{n+1} - \rho_i^{n+1} U_i^{n+1} \otimes U_i^{n+1}. \label{eq:StensorSigma}
\end{align} Equation \eqref{eq:SigmaEquation} is trivially solved for \(\Sigma_i^{n+1}\). Finally, using Equation \eqref{eq:StensorSigma} and the definition of the covariance matrix \(\Ttensor_i^{n+1}\), see Equation \eqref{eq:covarianceMatrix}, we obtain the following expression \begin{align}
	\mathcal{T}_i^{n+1} = \left[(1-\theta)(1-\nu)R_sT_{tr, i}^{n+1} + (1-\theta) \Delta t \frac{\omega_i^{n+1}}{\varepsilon} \left( (1-\theta)(1-\nu) R_s T_{tr, i}^{n+1} + \theta R_s T_{eq, i}^{n+1} \right) \right] I_3 \\ + (1-\theta)\omega_i^{n+1} \left[ \frac{\tilde{\Sigma}_i^{n}}{\rho_i^{n+1}} - U_i^{n+1} \otimes U_i^{n+1} \right], \nonumber 
\end{align} with \begin{align}
	\omega_i^{n+1} = \frac{\nu \varepsilon^{n+1}_i}{\varepsilon^{n+1}_i + \Delta t (1 - (1-\theta)\nu)}. 
\end{align} The Gaussian \(\mathcal{G}_{i}^{n+1}\) can now be evaluated fully explicitly using the macroscopic quantities derived from \(\tilde{f_i^n}\) and the expressions derived above. Then, Equation \eqref{eq:firstOrderImplicit} can be solved for \(f_{i, j}^{n+1}\) \begin{align}
	f_{i, j}^{n+1} = \frac{\tilde{f}_{i, j}^n \varepsilon^{n+1}_i + \Delta t G_{i, j}^{n+1} }{\varepsilon^{n+1}_i + \Delta t}. \label{eq:firstOrderImplicit2}
\end{align}

\subsection{Extension to higher orders}
\label{section:SLSecondOrder}
We extend the first-order scheme from Section \ref{section:SLFirstOrder} to second-order in time using a BDF2 method \begin{align}
	f^{n+1}_{i, j} - \frac{4}{3} f_{i, j}^{n, 1} + \frac{1}{3} f_{i, j}^{n, 2} = \frac{2}{3} \frac{1}{\varepsilon^{n+1}_i}\left(G_{i, j}^{n+1} - f^{n+1}_{i, j}\right), \label{eq:SLHigherOrder}
\end{align} where \(f_{i, j}^{n, k}\) is an approximation for \(f(t^n, x_i - k v_j \Delta t, v_j, I))\). The quantities \(f_{i, j}^{n, k}\) are interpolated from the grid at the previous time steps using a third-order bicubic reconstruction routine. For more involved reconstruction procedures, see \cite{groppi2014, choConservativeSemiLagrangianSchemes2021a, cho2021}. The scheme \eqref{eq:SLHigherOrder} can be written in the following way \begin{align}
	f^{n+1}_{i, j} = \tilde{f}^{n}_{i, j} + \frac{2}{3} \frac{1}{\varepsilon^{n+1}_i}\left(G_{i, j}^{n+1} - f^{n+1}_{i, j}\right), \text{with } \tilde{f}^{n}_{i, j} = \frac{4}{3} f_{i, j}^{n, 1} - \frac{1}{3} f_{i, j}^{n, 2}. 
\end{align} Using the procedure outlined in Section \ref{section:SLFirstOrder}, the quantity \(G_{i, j}^{n+1}\) can again be computed explicitly, this time from the linear combination of moments of \(f_{i, j}^{n, 1}\) and \(f_{i, j}^{n, 2}\) through \(\tilde{f}^{n}_{i, j}\). Furthermore, because BDF methods are asymptotic‑preserving (AP) and stiffly accurate (SA), they recover, in the limit \(\varepsilon \to 0\), a scheme of the same order for the Euler equations.
\begin{remark}
	We note that the central bicubic reconstruction proposed here will yield oscillations at shocks in the \(\varepsilon \to 0\) limit. Since our investigation is restricted to the slip and transition regime, where such features do not develop, central higher-order reconstructions are safe and efficient.
\end{remark}

\subsection{Boundary conditions}
\label{section:BCs}
\begin{figure}[htb]
	\centering
	\begin{tikzpicture}[scale=1.1, >=Stealth]
		
		% Axes
		\draw[->] (0,0) -- (6.5,0) node[right] {$x$};
		\draw[->] (0,0) -- (0,2.5) node[above] {$t$};
		\draw[-] (0,1.5) -- (6,1.5);
		
		% Time levels
		\node[left] at (0,0) {$t^n$};
		\node[left] at (0,1.5) {$t^{n+1}$};
		
		% Spatial grid points
		\fill (0,0) circle (2pt);
		\fill (0,1.5) circle (2pt);
		\node[below] at (0,0) {$x_0$};
		\foreach \x/\lab in {1.2/$x_{1}$,2.4/$x_{2}$,3.6/$x_3$}
		{
			\fill (\x,0) circle (2pt);
			\fill (\x,1.5) circle (2pt);
			\node[below] at (\x,0) {\lab};
		}		
		% Foot of characteristic
		%\fill (1.7,0.0) circle (2pt);
		%\node[below] at (1.7,0.0) {$\tilde{x}_i$};
		%\node[above] at (1.7,0.0) {$\tilde f^n_{ij}$};
		
		% Point at next time
		%\fill (3.6,1.5) circle (2pt);
		%\node[above] at (3.6,1.5) {$f^{n+1}_{ij}$};
		
		% Characteristic line
		\draw[dashed] (0,1.5) -- (-1.8,0.0);
		\draw[dashed] (0,1.5) -- (-0.9,0.0);
		\draw[thick] (0,1.5) -- (0.9,0.0);
		\draw[thick] (0,1.5) -- (1.8,0.0);
		\draw[thick] (0,1.5) -- (2.7,0.0);
		
		% Velocity label
		% \node[right] at (1.7,0.9) {$v_j>0$};
		
	\end{tikzpicture}
	\caption{At a boundary, characteristics for which \(v_j \cdot n \geq 0\) require a boundary condition.}
	\label{fig:SLCharacteristicBC}
\end{figure}
At a point boundary point, the characteristic feet for which \(v_j \cdot \normal \leq 0\), with \(\normal\) the inward-pointing normal, lie inside the gas domain. The solution at these points can be reconstructed in the usual way. The characteristics satisfying \(v_j \cdot \normal \geq 0\) fall outside the gas domain, see Figure \ref{fig:SLCharacteristicBC}, and hence require a boundary condition. After applying the boundary condition, the full distribution \(\Tilde{f}^n_i\) is known and the collision step can be performed, e.g. for the first order scheme see Equation \eqref{eq:firstOrderImplicit2}.

In this text, we consider three types of boundary conditions: specular reflection, diffuse-reflection and subsonic inflow/outflow. In the case of specular reflection, the distribution function \(\Tilde{f}^n_{i, j}\) is mapped onto \(\Tilde{f}^n_{i, k}\), where \(v_k\) is the velocity with the same tangential velocity and opposite normal velocity \begin{align}
	v_k = v_j - 2 \normal \left(v_j \cdot \normal \right). 
\end{align} We note that at a specular boundary, the internal energy remains unchanged. In the case of diffuse-reflective boundary conditions, the distribution of particles striking the wall is reflected back inside the domain as a Maxwellian with wall velocity \(U_w\) and wall temperature \(T_w\). The boundary condition reads \begin{align}
	\Tilde{f}^n_{i,j} = \mathcal{M}^n_{w, i, j} =  \frac{\rho_w \Lambda_\delta}{\left(2\pi R_s T_w\right)^{3/2}(R_s T_w)^{\delta/2}} \exp \left( -\frac{\abs{v_j - U_w}^2}{2 R_s T_w} - \frac{I^{2/\delta}}{R_s T_w}  \right), \quad v_j \cdot \normal \geq 0. \label{eq:DRBC}
\end{align} The density of the gas that is reflected is such that the flux at the boundary is zero, or in other words, mass is conserved across the boundary, \begin{align}
	\left< \sum_{j, (v_j - U_w) \cdot \normal > 0} \mathcal{M}^n_{w, i, j}\,(v_j - U_w) \cdot \normal \right>_I + \left< \sum_{j, (v_j - U_w) \cdot \normal  \leq 0} \Tilde{f}^n_{i,j} \, (v_j - U_w) \cdot \normal \right>_I = 0. \label{eq:boundaryFlux}
\end{align} Equation \eqref{eq:boundaryFlux} can be solved for the density \(\rho_w\) after reconstructing the distribution at the characteristic feet for all velocities that satisfy \(v_j \cdot \normal \leq 0\). Then, the distribution for the outgoing characteristics is obtained from Equation \eqref{eq:DRBC}. 

Finally, we consider the subsonic inflow and outflow boundary condition. We assume the distribution of the gas flowing in/out of the domain is in thermodynamic equilibrium. In other words, we assume the distribution \(\Tilde{f}^n_{i,j}, v_j \cdot \normal \geq 0\) is a Maxwellian with some density \(\rho_f\), velocity \(U_f\) and temperature \(T_f\). The quantities \(\rho_f\), \(U_f\) and \(T_f\) are then determined from boundary conditions for the Euler equations. First, the density, mean velocity, and temperature are extrapolated from inside the domain to the boundary. We denote these values with subscript \(e\). Then, we define the extrapolated speed of sound and extrapolated pressure \begin{align}
	p_{e} = \rho_{e} R_s T_{e} \\
	c_{e} = \sqrt{\frac{\delta + 5}{\delta + 3} T_{e} R_s}. 
\end{align} Then, for the subsonic inflow boundary condition, given the inflow pressure \(p_{in}\) and inflow temperature \(T_{in}\), the inflow quantities are defined as \begin{align}
	\rho_f = \frac{p_{in}}{T_{in} R_s}, \quad U_f = U_{e} + \frac{p_{in} - p_{e}}{\rho_{e} c_{e}} \normal, \quad T_f = \frac{p_{in}}{R_s \rho_f}. 
\end{align} In the case of subsonic outflow boundary conditions, we prescribe the outflow pressure \(p_o\), and set the outflow quantities to \begin{align}
	\rho_f = \rho_{e} + \frac{p_o - p_{e}}{c_{e}c_{e}}, \quad U_f = U_{e} + \frac{p_{o} - p_{e}}{\rho_{e} c_{e}} \normal, \quad T_f = \frac{p_{in}}{R_s \rho_f}. 
\end{align} For a derivation of boundary conditions for the Euler equations, see \cite{whitfield1984}. We note that inflow/outflow boundary conditions based on the Euler equations are also used in DSMC simulations \cite{wang2004, nance1997}. 
\begin{remark}
	It may happen that characteristics from points that do not lie on the boundary fall outside the gas domain. For simplicity, we limit the time step so that this does not occur. However, through the use of ghost points and additional interpolation, the procedure above can be generalised \cite{groppiBoundaryConditionsSemiLagrangian2016}.  
\end{remark}

\begin{remark} % Reconstruction
	At a boundary, in the case of the higher-order method, it is not possible to use a central third-order reconstruction. Instead, we resort to one-sided stencils. The use of different stencils breaks the translation invariance property of the scheme and may introduce conservation errors, especially in the \(\varepsilon \to 0\) limit \cite{choConservativeSemiLagrangianSchemes2021a}. Since our investigation is restricted to slip and transition flows, this is not an issue.  
\end{remark}

\subsection{Chu-reduction} % Chu-reduction
\label{section:Chu}
In the numerical examples in Section \ref{section:Numerics} we will restrict ourselves two-dimensional simulations in space. Consider the velocity component orthogonal to the spatial domain \(v_3\), with \(v = \left(v_1, v_2, v_3\right)^\top\), and consider the internal energy variable \(I\). Although the simulations should include effects due to motion in three dimensions and internal energy modes, the full distribution in \(v_3\) or \(I\) is irrelevant. Ultimately, some specific moments, such as the mean velocity \(U\) or the internal temperature \(T_{int}\) are the quantities of interest. In addition, it is expensive to discretise the dimensions \(v_3\) and \(I\). It turns out that for BGK-type models it is not necessary to consider the distribution \(f(t, x, v, I)\)'s dependence on \(v_3\) and \(I\). Instead, the same result can be achieved using reduced distributions. This procedure is known as a Chu-reduction \cite{chuKineticTheoreticDescriptionFormation1965, andriesGaussianBGKModelBoltzmann2000}. We define three reduced distribution functions \begin{align}
	f_1(t, x, \vbar) &= \int_{\mathbb{R}\cross \mathbb{R}^+} f(t, x, v, I) dv_3 dI, \quad
	f_2(t, x, \vbar) = \int_{\mathbb{R}\cross \mathbb{R}^+} v_3^2 f(t, x, v, I) dv_3 dI, \\	
	f_3(t, x, \vbar) &= \int_{\mathbb{R}\cross \mathbb{R}^+} I^{\delta/2} f(t, x, v, I) dv_3 dI,
\end{align} with \(\vbar = \left(v_1, v_2\right)^\top\). We assume the mean velocity in the direction orthogonal to the two-dimensional space \(x\) is zero. In addition, we assume the distribution function \(f(t, x, v, I)\) is symmetric in \(v_3\). The conserved quantities defined in Section \ref{section:polyatomicESBGK} now become \begin{align}
	\rho(t, x) = \left< f_1 \right>, \quad \rho(t, x) \underline{U}(t, x) = \left< f_1 \right>, \quad E(t, x) = \left< \frac{1}{2} \abs{\vbar}^2 f_1 \right> + \left< \frac{1}{2} f_2 \right> + \left< f_3 \right>. 
\end{align} The two internal energy components become \begin{align}
	e_{tr}(t, x)  &= \frac{1}{\rho(t, x)} \left<
	\tfrac{1}{2} \abs{\vbar - \underline{U}(t,x)}^2 f_1
	\right> + \left< \frac{1}{2} f_2 \right> = \frac{3}{2}\, R_s\, T_{tr}(t, x), \\
	e_{int}(t, x) &= \frac{1}{\rho(t, x)} \left< f_3 \right> = \frac{\delta}{2}\, R_s\, T_{int}(t, x). 
\end{align} Finally, we define the reduced stress tensor \(\underline{\Stensor} \in \mathbb{R}^{2 \cross 2}\) and reduced covariance matrix \(\underline{\Ttensor} \in \mathbb{R}^{2 \cross 2}\) as \begin{align}
	\rho(t, x) \Stensor = \begin{pmatrix}
		\rho(t, x) \underline{\Stensor} & 0 \\
		0 & \left< f_2 \right>
	\end{pmatrix} \text{ with }\, \rho(t, x) \underline{\Stensor} = \left< (\vbar - \underline{U}(t, x)) \otimes (\vbar - \underline{U}(t, x)) f_1 \right> \\
	\Ttensor = (1 - \theta) \left[ (1 - \nu)R_s T_{tr}(t, x) I_3 + \nu \Stensor \right] + \theta R_s T_{eq}(t, x) I_3 = \begin{pmatrix}
		\underline{\Ttensor} & 0 \\
		0 & \Ttensor_{33}
	\end{pmatrix}. 
\end{align} The reduced distribution functions \(f_1\) and \(f_2\) contain all mass and momentum, and the distribution function \(f_3\) only contributes to the internal energy. By multiplying the polyatomic ESBGK equation with \(1\), \(v_3^2\) and \(I^{2/\delta}\), and integrating over \(v_3\) and \(I\), we obtain three BGK-type equations for the reduced distribution functions \begin{align}
	\pdv{f_1}{t} + v \cdot \nabla_x f_1 = \frac{1}{\varepsilon}(G_1 - f_1), \quad
	\pdv{f_2}{t} + v \cdot \nabla_x f_2 = \frac{1}{\varepsilon}(\Ttensor_{33} G_1 - f_2), \quad		
	\pdv{f_3}{t} + v \cdot \nabla_x f_3 = \frac{1}{\varepsilon}(\frac{\delta}{2} R T_{rel}G_1 - f_3). \label{eq:ChuEquations}
\end{align} The attractor in Equation \eqref{eq:ChuEquations} is defined by \begin{align}
	G_1 = \frac{\rho(t, x) \sqrt{\Ttensor_{33}}}{2\pi \sqrt{\det(\Ttensor_i)}} \exp \left( - \frac{1}{2} \left( \vbar -\underline{U}(t, x) \right)^\top \underline{\Ttensor}^{-1} \left( \vbar -\underline{U}(t, x) \right) \right). 
\end{align} Thus, we have reduced the dimension of the equation from eight to five, thereby significantly speeding up simulations.

\section{Asymptotic preserving property and Navier-Stokes limit}
\label{section:AP}
In this section, we study several properties of the first-order scheme \eqref{eq:firstOrderImplicit}. The properties below are generalisations of the ESBGK model \cite{filbet2011, boscarino2025a}. 
\begin{proposition}
	For \(\tau > 0\) and \(\Delta t > 0\), the solution of \eqref{eq:firstOrderImplicit} satisfies \begin{align}
		0 \leq f_{i, j}^{n+1} \leq \max_{i, j} \left\{\norm{f^n_{i, j}}, \norm{\mathcal{G}^{n+1}_{i, j}}\right\}. 
	\end{align}
\end{proposition}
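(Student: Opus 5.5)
The plan is to use the explicit solved form \eqref{eq:firstOrderImplicit2} of the implicit step \eqref{eq:firstOrderImplicit}. By the reductions of Sections~\ref{section:SL1}--\ref{section:SL3}, the macroscopic quantities $\rho_i^{n+1}$, $U_i^{n+1}$, $T_{tr,i}^{n+1}$, $T_{int,i}^{n+1}$ and $\Ttensor_i^{n+1}$ are determined from $\tilde f^n$. Hence $\mathcal{G}^{n+1}_{i,j}$ and $\varepsilon_i^{n+1}$ are fixed numbers, and the update reads
\begin{align*}
f_{i,j}^{n+1} = \frac{\varepsilon_i^{n+1}}{\varepsilon_i^{n+1}+\Delta t}\,\tilde f_{i,j}^n + \frac{\Delta t}{\varepsilon_i^{n+1}+\Delta t}\,\mathcal{G}^{n+1}_{i,j}.
\end{align*}
Since $\varepsilon_i^{n+1}>0$ (it is proportional to $\tau>0$) and $\Delta t>0$, the two weights $\alpha=\varepsilon_i^{n+1}/(\varepsilon_i^{n+1}+\Delta t)$ and $1-\alpha$ lie in $(0,1)$ and sum to one. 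So $f_{i,j}^{n+1}$ is a convex combination of $\tilde f_{i,j}^n$ and $\mathcal{G}_{i,j}^{n+1}$.

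Both bounds then follow from the convex combination, once we know the endpoints are nonnegative and $\tilde f^n$ is controlled by $f^n$.

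For $\mathcal{G}^{n+1}_{i,j}\ge 0$: it is an exponential times the prefactor $\rho_i^{n+1}\Lambda_\delta/\bigl(\sqrt{\det(2\pi\Ttensor_i^{n+1})}\,(R_sT_{rel,i}^{n+1})^{\delta/2}\bigr)$. This needs $\rho_i^{n+1}=\tilde\rho_i^n\ge 0$, $\Ttensor_i^{n+1}$ positive definite, and $T_{rel,i}^{n+1}>0$. I would assume these as part of the well-posedness of $\mathcal{G}$. Alternatively, they can be checked directly. $\Ttensor$ is a convex combination of positive semidefinite matrices: $\Stensor\succeq0$ as a second moment, and $(1-\nu)T_{tr}I_3+\nu\Stensor\succeq0$ for $-1/2\le\nu<1$ because the eigenvalues of $\Stensor$ sum to $3R_sT_{tr}$. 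Positivity of the temperatures can be read off the linear-in-differences structure of \eqref{eq:SLTranslationalTemperature}--\eqref{eq:SLInternalTemperature}, which is an implicit Euler step of a relaxation between the two temperatures.

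For $\tilde f^n$: with linear interpolation, $\tilde f^n_{i,j}$ is a convex combination of the two neighbouring grid values $f^n_{i',j}$. Hence $0\le\tilde f^n_{i,j}\le\max_{i}f^n_{i,j}$, provided $f^n\ge0$, which holds by induction from $f^0\ge0$. The same bound holds at boundaries, where $\tilde f$ is replaced by a nonnegative Maxwellian or a reflected value. Combining the two steps gives
\begin{align*}
0\le f_{i,j}^{n+1}\le \alpha\max|f^n|+(1-\alpha)\max|\mathcal{G}^{n+1}|\le\max_{i,j}\bigl\{\|f^n_{i,j}\|,\|\mathcal{G}^{n+1}_{i,j}\|\bigr\}.
\end{align*}

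The main obstacle I expect is not the convex-combination argument. It is justifying that $\mathcal{G}^{n+1}$ is well defined and nonnegative, namely positivity of $\Ttensor^{n+1}$ and of $T_{rel}^{n+1}$ produced by the Newton system. Second, the bound relies on the reconstruction being monotone (linear interpolation). For the bicubic reconstruction the maximum principle for $\tilde f$ fails, so I would state the result for the first-order scheme only.
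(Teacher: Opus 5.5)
This is the paper's argument: $f^{n+1}_{i,j}$ is a convex combination of $\tilde f^n_{i,j}$ and $\mathcal{G}^{n+1}_{i,j}$, and linear interpolation makes $\tilde f^n_{i,j}$ a convex combination of grid values of $f^n$. Your positivity check on $\mathcal{G}^{n+1}$ goes beyond what the paper writes, but as stated it is circular: you justify $\Stensor^{n+1}\succeq 0$ "as a second moment", which presupposes $f^{n+1}\ge 0$, the very thing you are proving. Obtain it instead from the $v\otimes v$ moment of the scheme, $\bigl(\varepsilon^{n+1}+\Delta t\,(1-(1-\theta)\nu)\bigr)\Stensor^{n+1}=\varepsilon^{n+1}\,\tilde{\Stensor}^n+\Delta t\,\bigl[(1-\theta)(1-\nu)R_sT^{n+1}_{tr}+\theta R_sT^{n+1}_{eq}\bigr]I_3$, where $\tilde{\Stensor}^n\succeq 0$ because $\tilde f^n\ge 0$.

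Your remark about boundaries is only half right. The lower bound survives, but the upper bound can fail at diffuse-reflective or inflow boundaries. A cold-wall Maxwellian, with density fixed by the zero-flux condition, can exceed both $\max f^n$ and $\max \mathcal{G}^{n+1}$. With such boundaries, the boundary data must therefore be included in the maximum.
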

\textbf{\textit{Proof}} The solution \(f_{i, j}^{n+1}\) is a convex combination of \(\mathcal{G}^n_{i, j}\) and \(\tilde{f}_{i, j}^n\). The value \(\tilde{f}_{i, j}^n\) is the result of linear interpolation of values \(f^n_{:, j}\). This implies the desired result. 

The following proposition concerns the asymptotic‑preserving (AP) property of the scheme. A numerical scheme is said to be AP if, in the limit as the Knudsen number tends to zero, it naturally reduces to a consistent discretisation of the Euler equations. Before this limit is reached, the underlying kinetic equation converges asymptotically to the Navier-Stokes equations with well‑defined transport coefficients. The scheme under consideration can be shown to possess this property as well.
\begin{proposition}
	Let \(f^n = f(t^n, x, v, I)\) be a time-discretised solution of the polyatomic ESBGK equation. Then, assuming that the following quantities are bounded by positive constants \(C_1, C_2, C_3\) and \(C_4\) independent of \(n\), \begin{align}
		\sum_{0 \leq \alpha \leq 2} \sup_{x, v, I} \left| \partial_x^\alpha f^n(x, v, I) \left(1 +|v|^2\right)^4 \right| < C_1, \quad \sum_{0 \leq \alpha \leq 2} \sup_{x, v, I} \left| \partial_x^2 f^n(x, v, I) |v|^2 I^{2/\delta} \right| < C_2 \\ 
		\sum_{0 \leq \alpha \leq 2} \sup_{x, v, I} \left| \partial_x^\alpha \mathcal{M}^\alpha(x, v, I) \left(1 +|v|^2\right)^4 \right| < C_3, \quad \sum_{0 \leq \alpha \leq 2} \sup_{x, v, I} \left| \partial_x^2 \mathcal{M}^\alpha(x, v, I) |v|^2 I^{2/\delta} \right| < C_4,
	\end{align} the first-order semi-Lagrangian scheme \eqref{eq:firstOrderImplicit} asymptotically becomes a consistent approximation of the Navier-Stokes system \eqref{eq:NS}. 
\end{proposition}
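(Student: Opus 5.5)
The plan is to carry out a Chapman--Enskog expansion of the time-semi-discrete scheme \eqref{eq:firstOrderImplicit}, with the linear interpolation replaced by the exact shift \(\tilde f^n(x,v,I)=f^n(x-v\Delta t,v,I)\), and to treat \(\varepsilon\) and \(\Delta t\) as small parameters of the same order. The scheme is first rewritten in the explicit form \eqref{eq:firstOrderImplicit2}, i.e. \(f^{n+1}=\mathcal{G}^{n+1}+\frac{\varepsilon}{\varepsilon+\Delta t}(\tilde f^n-\mathcal{G}^{n+1})\). Iterating this relation once gives \(f^{n+1}=\mathcal{G}^{n+1}+\mathcal{O}(\varepsilon)\). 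Since \(\mathcal{G}=\mathcal{M}+\mathcal{O}(\varepsilon)\) once \(\Stensor-R_sT_{tr}I_3\) and \(T_{int}-T_{eq}\) are \(\mathcal{O}(\varepsilon)\), we obtain \(f^n=\mathcal{M}^n+\varepsilon g^n\). The moment bounds \(C_1,\dots,C_4\) are used here to guarantee that every Taylor remainder in \(x\) (from \(f^n(x-v\Delta t)=f^n-v\Delta t\,\partial_xf^n+\mathcal{O}(\Delta t^2)\)) and every moment integral with weights up to \(|v|^2I^{2/\delta}\) or \((1+|v|^2)^4\) is uniformly bounded. This justifies interchanging sums and integrals and absorbing the remainders into \(\mathcal{O}(\varepsilon^2+\Delta t^2)\).

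Next, the conservation relation \eqref{eq:SLSchemeConservation} is taken, \(\langle\langle\phi f^{n+1}\rangle\rangle=\langle\langle\phi f^n(x-v\Delta t)\rangle\rangle\), and expanded in \(\Delta t\). This gives \(\frac{1}{\Delta t}(m^{n+1}-m^n)+\partial_x\langle\langle v\phi f^n\rangle\rangle=\frac{\Delta t}{2}\partial_x^2\langle\langle v^2\phi f^n\rangle\rangle+\dots\). Substituting \(f^n=\mathcal{M}^n+\varepsilon g^n\), the \(\mathcal{M}\) part yields the Euler fluxes, which proves the AP/Euler limit at order zero. The order-\(\varepsilon\) part requires the stress \(\langle\langle v\otimes v\, g\rangle\rangle\) and the heat flux \(\langle\langle v(\frac12|v|^2+I^{2/\delta})g\rangle\rangle\). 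To identify \(g\), the scheme is written as \(\varepsilon g^{n+1}\approx-\varepsilon\,\Delta t^{-1}\cdot\)(something). More concretely, \(f^{n+1}-\mathcal{G}^{n+1}=\frac{\varepsilon}{\Delta t}(\tilde f^n-f^{n+1})\), which equals \(-\varepsilon(\partial_t\mathcal{M}+v\partial_x\mathcal{M})+\mathcal{O}(\varepsilon\Delta t+\varepsilon^2)\). The time derivative of \(\mathcal{M}\) is eliminated with the discrete Euler equations obtained at the previous order, exactly as in the continuous Chapman--Enskog procedure.

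The main obstacle is that \(\mathcal{G}\) differs from \(\mathcal{M}\) at order \(\varepsilon\) through the non-conserved moments \(\Stensor^{n+1}\), \(T_{tr}^{n+1}\) and \(T_{int}^{n+1}\). Hence \(g\) is not simply \(-(\partial_t+v\partial_x)\mathcal{M}\). I would handle this by using the explicit formulas derived earlier: the Newton system \eqref{eq:SLTranslationalTemperature}--\eqref{eq:SLInternalTemperature} and the closed-form \(\Ttensor_i^{n+1}\) with \(\omega_i^{n+1}\). Expanding these in \(\varepsilon\) and \(\Delta t\) should show that \(T_{tr}-T_{eq}\) is \(\mathcal{O}(\varepsilon)\), with a coefficient proportional to \(\partial_xU\)/\(\theta^{-1}\). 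This reproduces the bulk (second) viscosity, and the deviatoric part of \(\Stensor\) acquires the factor \(1/(1-\nu+\theta\nu)\) that yields the shear viscosity \(\mu\). Taking the moments of \(g\) and inserting \(\varepsilon=\mu(1-\nu+\theta\nu)/p\) should then give the viscous stress \(\mu\sigma(U)\) plus a bulk-viscosity term, and a heat flux \(\kappa\partial_xT\) with the correct polyatomic Prandtl number. Thus the scheme reduces to a consistent first-order time discretization of \eqref{eq:NS}, up to \(\mathcal{O}(\varepsilon^2+\varepsilon\Delta t+\Delta t)\). The bookkeeping of the extra \(I^{2/\delta}\)-weighted moments in the heat flux is where the \(C_2\), \(C_4\) bounds enter and where I expect most of the computational care to be required.
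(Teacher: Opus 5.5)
Your proposal is essentially the paper's proof: moments of the scheme against the collision invariants, with a Taylor expansion of \(f^n(x-v\Delta t,v,I)\) whose remainder is controlled by the \(C_i\) bounds; the ansatz \(f^n=\mathcal{M}^n+\varepsilon f_1^n\), which gives the discrete Euler system at order zero; and \(f_1=\mathcal{G}_1-(\mathcal{M}^{n+1}-\mathcal{M}^n)/\Delta t-v\cdot\nabla_x\mathcal{M}^n\), with the time difference removed using the discrete Euler equations. The one difference is that you take the \(\mathcal{O}(\varepsilon)\) corrections of \(T_{tr}\), \(T_{int}\) and \(\Theta\) from the scheme's closed-form updates (the temperature system and the \(\omega\)-formula), whereas the paper obtains them as self-consistent moments of \(f_1\). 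The two routes are equivalent, giving the same \(T_{tr}^{\varepsilon}=-\frac{2\delta}{3(3+\delta)\theta}T_{eq}\nabla_x\cdot U\) and the factor \(1/(1-\nu+\theta\nu)\) on the deviatoric stress. One small correction: with \(\varepsilon\sim\Delta t\), a single iteration does not give \(f^{n+1}=\mathcal{G}^{n+1}+\mathcal{O}(\varepsilon)\), because \(\varepsilon/(\varepsilon+\Delta t)=\mathcal{O}(1)\); you need an induction on \(n\), or simply the formal ansatz as the paper uses.
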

\textbf{\textit{Proof}} We consider the first order scheme \eqref{eq:firstOrderImplicit}, and leave the space, velocity and internal energy variable continuous. We then take the moments with respect to the polyatomic collision invariants \(\phi(v, I)\) \begin{align}
	\int_{\mathbb{R}^3 \cross \mathbb{R}^+} \phi(v, I) f^{n+1} dvdI &= \int_{\mathbb{R}^3 \cross \mathbb{R}^+} \phi(v, I) \tilde{f}^n dvdI, \label{eq:NSProof1}
\end{align} with \(\tilde{f}^n = f(x - v\Delta t, v, I)\). Using a Taylor expansion, we expand the right hand side of Equation \eqref{eq:NSProof1}, \begin{align}
	\int_{\mathbb{R}^3 \cross \mathbb{R}^+} \phi(v, I) f^{n+1} dvdI &= \int_{\mathbb{R}^3 \cross \mathbb{R}^+} \phi(v, I) \left[ f^n(x, v, I) - \Delta t v \cdot \nabla_x f^n(x, v, I) \right] dvdI + R, \label{eq:NSProof2}
\end{align} where \(R\) is the remainder term in integral form. Using similar techniques from \cite{boscarino2025a}, it is possible to bound the remainder term \begin{align}
	\norm{R}_{\infty} &\leq \frac{3 \pi \Delta t^2}{8} \sup_{x, v, I} \left| \partial_x^2 f^n(x - v \Delta t, v, I) \left(1 +|v|^2\right)^4 \right| + \frac{\Delta t^2}{2} \sup_{x, v, I} \left| \partial_x^2 f^n(x, v, I) |v|^2 I^{2/\delta} \right| \\
	&= \mathcal{O}\left(\Delta t^2\right). 
\end{align} We now treat the relaxation time $\varepsilon$ as independent of $f^{n}(x, v, I)$ and formally expand both the distribution function and the Gaussian in powers of \(\varepsilon\), \begin{align}
	f^n = \mathcal{M}^n + \varepsilon f^n_1 + \mathcal{O}(\varepsilon^2), \quad
	\mathcal{G}^n = \mathcal{M}^n + \varepsilon \mathcal{G}^n_1 + \mathcal{O}(\varepsilon^2). \label{eq:Expansions}
\end{align} By plugging these expansions in the definition of a collision invariant \eqref{eq:collisionInvariant}, we obtain the so-called compatibility conditions \begin{align}
	\left< \left< f_1^n \left(1, v, \frac{1}{2} \abs{v}^2 + I^{2/\delta} \right)^\top \right> \right> = \left< \left< \mathcal{G}_1^n \left(1, v, \frac{1}{2} \abs{v}^2 + I^{2/\delta} \right)^\top \right> \right> = 0. \label{eq:CollisionInvariantCheck}
\end{align} Additionally, we expand the translational temperature, the internal temperature and the stress tensor like such \begin{align}
	T_{tr}^n &= T_{eq}^n + \varepsilon T_{tr}^{\varepsilon, n} + \mathcal{O}(\varepsilon^2), \\
	T_{int}^n &= T_{eq}^n + \varepsilon T_{int}^{\varepsilon, n} + \mathcal{O}(\varepsilon^2), \\
	\Stensor^n &= R_s \left(T_{eq}^n + \varepsilon T_{tr}^{\varepsilon, n}\right) I_3 + \varepsilon \Stensor^n_1 + \mathcal{O}(\varepsilon^2). 
\end{align} A necessary condition to satisfy the compatibility conditions \eqref{eq:CollisionInvariantCheck} is that \(\tr \Stensor^n_1 = 0\). Due to the definition of the internal energies and the stress tensor, we find that \begin{align}
	\frac{3}{2} \rho^n R_s T_{tr}^{\varepsilon, n} &= \left< \left< \frac{|v - U|^2}{2} f_1 \right> \right> \label{eq:transportTemperature}, \\
	\frac{\delta}{2} \rho^n R_s T_{int}^{\varepsilon, n} &= \left< \left<I^{2/\delta} f_1 \right> \right> \label{eq:internalTemperature}, \\
	\rho^n R_s T_{tr}^{\varepsilon, n} + \rho^n \Stensor^n_1 &= \left< \left<(v - U^n) \otimes (v - U^n) f_1 \right> \right>. \label{eq:stressTensor}
\end{align} By substituting a zero-th order expansion \(f^n = \mathcal{M}^n\) into Equation \eqref{eq:NSProof2}, we obtain a consistent scheme for the Euler equations
\begin{equation}
	\begin{aligned}
		\frac{\rho^{n+1} - \rho^n}{\Delta t} + \nabla_x \cdot (\rho^n U^n) &= \mathcal{O}(\Delta t) + \mathcal{O}(\varepsilon), \\
		\frac{\rho^{n+1} U^{n+1} - \rho^{n} U^{n}}{\Delta t} + \Delta t \nabla_x \cdot \left(\rho^n U^n \otimes U^n + p I_3 \right) &= \mathcal{O}(\Delta t) + \mathcal{O}(\varepsilon), \\
		\frac{E^{n+1} - E^n}{\Delta t} + \nabla_x \cdot \left( U^n E^n \right) &= \mathcal{O}(\Delta t) + \mathcal{O}(\varepsilon).
	\end{aligned} \label{eq:SLEuler}
\end{equation} Inserting the first‑order expansion of the distribution function \(f^n = \mathcal{M}^n + \varepsilon f^n_1\) into Equation \eqref{eq:NSProof2} yields a consistent discretisation of the Navier–Stokes equations
\begin{equation}
	\begin{aligned}
		\frac{\rho^{n+1} - \rho^n}{\Delta t} + \nabla_x \cdot (\rho^n U^n) &= \mathcal{O}(\Delta t) + \mathcal{O}(\varepsilon^2), \\
		\frac{\rho^{n+1} U^{n+1} - \rho^{n} U^{n}}{\Delta t} + \Delta t \nabla_x \cdot \left(\rho^n U^n \otimes U^n + p I_3 \right) &= - \varepsilon \nabla_x \cdot \left(\rho^n \Theta_1^n \right) + \mathcal{O}(\Delta t) + \mathcal{O}(\varepsilon^2), \\
		\frac{E^{n+1} - E^n}{\Delta t} + \nabla_x \cdot \left( U^n E^n \right) &= - \varepsilon \nabla_x \left( q_1^n - \rho^n \Theta_1^n U^n \right) + \mathcal{O}(\Delta t) + \mathcal{O}(\varepsilon^2).
	\end{aligned} \label{eq:SLNavierStokes}
\end{equation} where \begin{align}
	q_1^n = \int_{\mathbb{R}^3 \cross \mathbb{R}^+} \frac{\abs{v - U^n}^2}{2} (v - U^n) f_1^n dv dI,
\end{align} is the heat flux, and \(\rho \Stensor_1^n\) the viscosity tensor. In the text below, we apply a Chapman-Enskog expansion to obtain expressions for \(f_1^n\) and \(\mathcal{G}^n_1\). We then use these expressions to explicitly compute the transport coefficients in the Navier-Stokes equation \eqref{eq:SLNavierStokes}. The first step is to fill in the expansions \eqref{eq:Expansions} into \eqref{eq:firstOrderImplicit}, which yields \begin{align}
	\frac{\mathcal{M}^{n+1} - \Tilde{\mathcal{M}}^n}{\Delta t} = \mathcal{G}_1^{n+1} - f_1^{n+1} - \varepsilon \frac{f_1^{n+1} - \Tilde{f}_1^n}{\Delta t} + \mathcal{O}\left(\varepsilon^2\right). \label{eq:f1A}
\end{align} We Taylor-expand the terms with a tilde and solve the resulting equation for \(f_1^n\). For brevity, we drop all higher-order terms in \(\Delta t\) and \(\varepsilon\), and obtain \begin{align}
	f^{n+1}_1 = \mathcal{G}_1^{n+1} - \frac{\mathcal{M}^{n+1} - \mathcal{M}^n}{\Delta t} - v \cdot \nabla_x \mathcal{M}^n. \label{eq:f1B}
\end{align} Below, all three terms in the right hand side of Equation \eqref{eq:f1B} are approximated. It is the zero-th order term in \(\varepsilon\) that we require. 

Next, we expand \(\mathcal{G}_1^{n+1}\) in terms of \(\varepsilon\). We note that the covariance matrix \(\mathcal{T}^n\) and relative temperature \(T_{rel}^n\) both have contributions on the order \(\varepsilon\) due to the definitions of the translational temperature \eqref{eq:transportTemperature}, internal temperature \eqref{eq:internalTemperature} and stress tensor \eqref{eq:stressTensor}. Consequently, the prefactor and the exponent of \(\mathcal{G}_1^{n+1}\) \eqref{eq:Gaussian} must be expanded in \(\varepsilon\). We start with the covariance matrix \(\mathcal{T}_n\) \eqref{eq:polyAtomicCovarianceMatrix}  \begin{align}
	\mathcal{T}^n &= I_3 [R_s T_{eq}^n + (1-\theta) \varepsilon R_s T_{tr}^{\varepsilon, n} ] + (1- \theta) \nu \varepsilon \Theta_1^n + \mathcal{O}(\varepsilon^2). 
\end{align} The determinant and inverse of \(\mathcal{T}^n\) can be approximated as follows \begin{align}
	\det(\mathcal{T}^n) = (R_s T_{eq}^n)^3 + 3 \varepsilon (1 - \theta) R_s T_{tr}^{\varepsilon, n} (R_s T_{eq}^n)^2 + \mathcal{O}(\varepsilon^2), \label{eq:covarianceDet} \\
	\left( \mathcal{T}^n \right)^{-1} = \frac{1}{R_s T_{eq}^n} I_3 - \frac{\varepsilon (1 - \theta) }{(R_s T_{eq}^n)^2} \left[ I_3 R_s T_{tr}^{\varepsilon, n} + \nu \Theta_1^n \right] + \mathcal{O}(\varepsilon^2), \label{eq:covarianceInverse}
\end{align} where in the last line, we have used the Woodbury matrix identity. Using Equation \eqref{eq:internalTemperature}, we can write the relaxation temperature as \begin{align}
	T_{rel}^n = \theta T_{eq}^n + (1 - \theta) T_{int}^n = T_{eq}^n + \varepsilon (1 - \theta) T_{int}^{\varepsilon, n} + \mathcal{O}(\varepsilon^2). \label{eq:relaxationTemperature}
\end{align} Then, by filling \eqref{eq:relaxationTemperature}, \eqref{eq:covarianceInverse} and \eqref{eq:covarianceDet} into \eqref{eq:Gaussian}, and applying further Taylor expansions, we obtain an approximation for \begin{align}
	\mathcal{G}_1^n &= \frac{\mathcal{G}^n - \mathcal{M}^n}{\varepsilon} = M[f^n] \frac{(1 - \theta)}{2} \left( -\frac{3 T_{tr}^\varepsilon}{T_{eq}} + \frac{1}{(R T_{eq})^2} (v - U)^T \left[R T_{tr}^\varepsilon I_3  + \nu \Theta_1 \right] + \frac{2 R T_{int}^\varepsilon I^{2/\delta}}{(R T_{eq})^2} - \frac{\delta T_{int}^\varepsilon}{T_{eq}} \right). \label{eq:term1}
\end{align} It remains to obtain an expression for the final two terms in \eqref{eq:f1B}. The differential of the Maxwellian \(\mathcal{M}^n\) is given by \begin{align}
	d\mathcal{M}^n = \mathcal{M}^n d \ln \mathcal{M}^n = \mathcal{M}^n \left[ \frac{d\rho}{\rho} + \frac{dT_{eq}}{T_{eq}^n} \left( -\frac{3 + \delta}{2} + \frac{|v - U|^2}{2 R T_{eq}} + \frac{I^{2/\delta}}{R T_{eq}} \right) + \frac{(v - U)}{R T_{eq}} dU\right]
\end{align} Then, we can approximate the final two terms in \eqref{eq:f1B} as \begin{align}
	\frac{\mathcal{M}^{n+1} - \mathcal{M}^n}{\Delta t} = \mathcal{M}^n \left[\frac{\rho^{n+1} - \rho^n}{\Delta t} \frac{1}{\rho^n}  + \frac{T_{eq}^n - T_{eq}^n}{\Delta t}\frac{1}{T_{eq}^n}\left( -\frac{3+\delta}{2} + \frac{I^{2/\delta}}{R T_{eq}^n} + \frac{|v-U^n|^2}{2 R T_{eq}^n} \right) + \frac{(v - U^n)}{R T_{eq}^n} \cdot \frac{U^{n+1} - U^n}{\Delta t} \right] \label{eq:term2}
\end{align} and \begin{align}
	v \cdot \mathcal{M}^n = \mathcal{M}^n \left[ \frac{v \cdot \nabla_x \rho^n}{\rho^n} + \frac{v \cdot \nabla_x T_{eq}^n}{T_{eq}^n}\left( -\frac{3+\delta}{2} + \frac{I^{2/\delta}}{R T_{eq}^n} + \frac{|v-U^n|^2}{2 R T_{eq}^n} \right) + \frac{(v - U^n)}{R T_{eq}^n} v \cdot \nabla_x U^n \right].  \label{eq:term3}
\end{align} By gathering \eqref{eq:term1}, \eqref{eq:term2} and \eqref{eq:term3}, the first-order contribution in \(\varepsilon\) to the distribution function \(f^n\) is \begin{align} \begin{split}
		f_1^n &= \frac{(1-\theta)}{2} \left[ -\frac{3 T_{tr}^{\varepsilon, n}}{T_{eq}^n} + \frac{1}{(R_s T_{eq}^n)^2} (v - U^n)^T \left[R_s T_{tr}^{\varepsilon, n} I_3 + \nu \Stensor_1^n \right] (v - U^n) + \frac{2 R_s T_{int}^{\varepsilon, n}}{(R_s T_{eq}^n)^2} - \frac{\delta T_{int}^{\varepsilon, n}}{T_{eq}^n} \right] \\
		-& \frac{\mathcal{M}^n}{R_s T_{eq}^n } (v - U^n) \nabla_x U^n (v - U^n) - \mathcal{M}^n\left( -\frac{3+\delta + 2}{2} + \frac{I^{2/\delta}}{R_s T_{eq}^n} + \frac{|v-U^n|^2}{2 R_s T_{eq}^n} \right) \left( \frac{1}{ T_{eq}^n} (v - U^n) \cdot \nabla_x T_{eq}^n \right) \\
		+& \left( \frac{I^{2/\delta}}{R_s T_{eq}^n} + \frac{|v-U^n|^2}{2 R_s T_{eq}^n} \right) \frac{2}{3 + \delta} \nabla_x \cdot U^n. \label{eq:f1n}
	\end{split} 
\end{align} The expression for the perturbation \(f_1^n\) still depends on the translational and internal perturbation temperatures \(T_{tr}^{\varepsilon, n}\) and \(T_{int}^{\varepsilon, n}\). These can now both be computed from their definitions \eqref{eq:transportTemperature} and \eqref{eq:internalTemperature}. Since the computations involve only standard algebraic manipulations and higher‑order moments of Maxwellians, we omit the details. We find for the perturbation temperatures \begin{align}
	T_{tr}^{\varepsilon, n} = \frac{-2\delta}{(3 + \delta) 3} \frac{1}{\theta} T_{eq}^n \nabla_x \cdot U^n, \quad
	T_{int}^{\varepsilon, n} = \frac{2}{3 + \delta} \frac{1}{\theta} T_{eq}^n \nabla_x \cdot U^n. 
\end{align} It is straightforward to check that \(f_1^n\) satisfies the compatibility conditions \eqref{eq:CollisionInvariantCheck} with these expressions for the perturbation temperatures. It remains to explicitly compute the stress tensor and heat flux, \begin{align}
	\rho^n \Stensor^n &= \rho^n R_s T_{eq}^n I_3 - \mu \left[ \nabla_x U^n +  \left(\nabla_x U^n\right)^\top - \frac{2I_3}{3 + \delta} \nabla_x \cdot U^n - \frac{(1-\theta)}{\theta} (1 - \nu) \frac{2 \delta I_3 }{3(3 + \delta)} \nabla_x \cdot U^n \right], \\
	q_1^n &= - \frac{5 + \delta}{2} \mu (1 - \nu + \theta\nu) R_s \nabla_x T_{eq}^n. 
\end{align} By comparing the stress tensor and heat flux obtained via a Chapman–Enskog expansion of the continuous-in-time polyatomic ESBGK model \cite{andriesGaussianBGKModelBoltzmann2000}, we find identical transport coefficients. This demonstrates that the first-order scheme asymptotically converges to a discretization of the Navier–Stokes equations, and concludes the proof. 

\section{Numerical simulations}
\label{section:Numerics}
In this section, we present several numerical examples to illustrate the performance of our numerical method in comparison with existing tools, and to examine how the polyatomic ESBGK model contrasts with simpler BGK formulations, DSMC simulations, and the Navier–Stokes equations. We have implemented the first-order and second-order scheme for the Chu-reduced formulation, see Section \ref{section:Chu}, for the BGK, ESBGK and polyatomic ESBGK models. All simulations are performed in 2D on a Cartesian mesh with mesh spacing \(\Delta x\) and \(\Delta y\). Time steps are quantified using a CFL number defined as \(\text{CFL} = \min\left(\Delta x, \Delta y\right)/\vmax\). The code is available at \cite{willems_sl4bgk_2026}. 

All simulations below are performed for Hydrogen gas, with gas constant \(R_s = 4124.2\, \si{J / kg K } \). For all three BGK-type models, we use a temperature-dependent viscosity function that is fitted using experimental data \cite{mehl2010}\begin{align}
	\mu(T) = 8.9324 \times 10^{-6} \left(\frac{T}{299.9878}\right)^{0.6998} \, \si{Pa}. 
\end{align} For the ESBGK and polyatomic ESBGK models, we set the Prandtl number to the physically-correct value of 0.69. For the ESBGK model, this yields \(\nu = -0.4493\). For the polyatomic ESBGK model, we have two free parameters, \(\theta\) and \(\nu\). We again fix the Prandtl number to 0.69, and set \(\theta = 1/174\). This is due to the interpretation of the \(\theta\) as the inverse of the relaxation collision number. These choices lead to \(\nu = -0.4519\), for the polyatomic ESBGK model.

We compare are results to (polyatomic) DSMC simulations performed with OpenFOAM v2406. For simulations in which the Knudsen number is small and DSMC simulations are infeasible, we compare our results to simulations performed with Simcenter STAR-CCM+ 2506 Build 20.04.007. Star-CCM is a commercial computational fluid dynamics tool that implements a steady-state Navier-Stokes solver.

The remainder of this section is organised as follows. In section \ref{section:convergenceTest}, we perform a convergence analysis to verify the order of the semi-Lagrangian scheme. Section \ref{section:FourierCouette} examines the Fourier and Couette flow test cases \cite{gallis2011}, which use simplified geometries and boundary conditions to isolate the heat-conduction and shear-stress behavior of the models. We perform simulations across the free-molecular, transition, slip, and continuum regimes, and compare the results with DSMC and analytical solutions. In Section \ref{section:orifice}, we consider a more challenging orifice geometry and benchmark our method against DSMC and Navier–Stokes simulations. Finally, Section \ref{section:movingOrifice} investigates the same orifice configuration with a time-dependent opening.

\subsection{Convergence test}
\label{section:convergenceTest}
In this section, we perform a non-physical simulation to verify the accuracy of our method. Since the method is the same for all three BGK models, we perform this simulation with the BGK model only. Consider a two-dimensional periodic domain \([-1, 1]^2\), with a gas with gas constant \(R_s = 1\) and fixed relaxation time \(\tau = 1 \times 10^{-1}\). Initially, the gas is in local thermodynamic equilibrium with mean velocity \(U_0(x, y) = \left(U_0, U_0, 0\right)^\top\) with 
\begin{align}
	U_0 = \frac{1}{10} \left[ \exp\!\left(-\left(10 \sqrt{(x - 0.2)^2 + y^2} - 1\right)^2\right)
		- 2\,\exp\!\left(-\left(10 \sqrt{(x + 0.2)^2 + y^2} - 1\right)^2\right) \right]. 
\end{align} The initial temperature and pressure are set to unity. The velocity variable is discretised uniformly on the domain \([-1, 1]^2\) using \(30^2\) points. The simulation is performed up to time \(1 \times 10^{-1}\) with CFL number \(0.45\) such that only the spatial discretisation error is visible. The simulation is performed with the first-order in time method from Section \ref{section:SLFirstOrder} and with the second-order in time method from Section \ref{section:SLSecondOrder}, for increasingly fine grids. In figure \ref{fig:convergence}, we report the \(L_1\) error, computed with the higher-order method on a grid with 624 grid points. We observe that both methods achieve their order as advertised. In the following test cases, simulations are performed with the higher-order method. 
\begin{figure}
	\centering
	\includegraphics[width=0.5\linewidth]{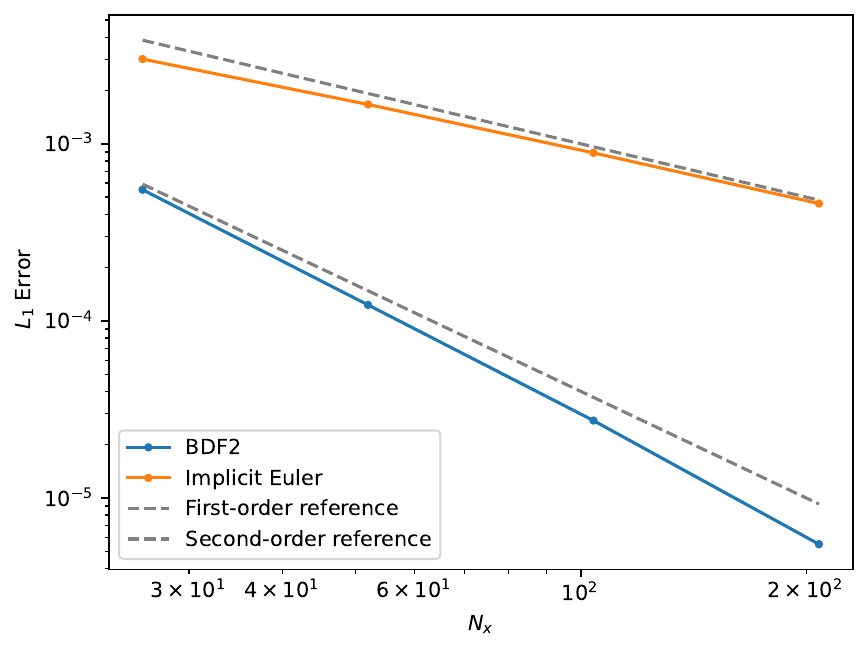}
	\caption{Convergence in the \(L_1\)-norm of the first-order scheme from Section \ref{section:SLFirstOrder} and the second-order scheme from Section \ref{section:SLSecondOrder}. }
	\label{fig:convergence}
\end{figure}

\subsection{Fourier and Couette flow}
\label{section:FourierCouette}
\newcommand{\Height}{2.5} 
\begin{figure}[htb]
	
	\centering
	\begin{tikzpicture}[
		wall/.style={very thick},
		flowarrow/.style={->, thick},
		stressarrow/.style={->, thick, dashed},
		>=Stealth,
		font=\small,
		line cap=rect,
		line join=round
		]
		
		% =========================
		% Fourier Flow
		% =========================
		\begin{scope}
			% Walls
			\draw[wall] (0,0) -- (0,\Height);
			\draw[wall] (2.5,0) -- (2.5,\Height);
			
			% Heat flux arrows
			\foreach \i in {1,...,5} {
				\pgfmathsetmacro{\y}{\i*\Height/6}
				\draw[flowarrow] (2.1,\y) -- (0.4,\y);
			}
			
			% Labels
			\node[left]  at (0,0.5*\Height) {$T_1$};
			\node[right] at (2.5,0.5*\Height) {$T_2$};
			
			\node[below] at (0,0) {$x=0$};
			\node[below] at (2.5,0) {$x=L$};
			
			\node[above] at (1.25,\Height+0.3) {\textbf{Fourier Flow}};
		\end{scope}
		
		% =========================
		% Couette Flow
		% =========================
		\begin{scope}[xshift=4cm]
			% Walls
			\draw[wall] (0,0) -- (0,\Height);
			\draw[wall] (2.5,0) -- (2.5,\Height);
			
			% Velocity arrows at walls
			\draw[flowarrow] (-0.2,0.75*\Height) -- (-0.2,0.25*\Height);
			\draw[flowarrow] (2.7,0.25*\Height) -- (2.7,0.75*\Height);
			
			% Velocity labels
			\node[left]  at (-0.2,0.5*\Height) {$U_1$};
			\node[right] at (2.7,0.5*\Height) {$U_2$};
			
			% Linear velocity profile
			\draw[thick, dashed] (0, 0.25*\Height) -- (2.5, 0.75*\Height);
			\draw[thick, dashed] (0, 0.5*\Height) -- (2.5, 0.5*\Height);
			
			% Shear stress arrows
			\draw[stressarrow] (0.2,0.5*\Height) -- (0.2,0.3*\Height);
			\draw[stressarrow] (0.6,0.5*\Height) -- (0.6,0.38*\Height);
			\draw[stressarrow] (2.3,0.5*\Height) -- (2.3,0.7*\Height);
			\draw[stressarrow] (1.9,0.5*\Height) -- (1.9,0.62*\Height);
			
			% Labels
			\node[below] at (0,0) {$x=0$};
			\node[below] at (2.5,0) {$x=L$};
			
			\node[above] at (1.25,\Height+0.3) {\textbf{Couette Flow}};
		\end{scope}
		
	\end{tikzpicture}
	\caption{Illustration of the Fourier and Couette flow.}
	\label{fig:FourierCouette}
\end{figure}

\begin{table}[htb]
	\centering
	\begin{tabular}{||c c c c||}
		\hline
		$L$ & $T_{\text{ref}}$ & $\Delta T$ & $\Delta U$ \\
		\hline\hline
		$1\,\si{mm}$ & $273.15\,\si{K}$ & $70\,\si{K}$ & $100\,\si{m/s}$ \\
		\hline
	\end{tabular}
	\caption{Simulation parameters for the Fourier and Couette flow simulations.}
	\label{tab:FCValues}
\end{table}
We consider a square 2D domain \(\left[0, L\right]^2\) with periodic boundaries in the vertical direction and diffuse-reflective walls on the sides, see Figure \ref{fig:FourierCouette}. In the case of a Fourier flow, the left and right walls are stationary and have temperatures \(T_1 = T_{ref} - \frac{\Delta T}{2}\) and \(T_2 = T_{ref} + \frac{\Delta T}{2}\). In the case of the Couette flow, the left and right walls have a temperature \(T_{ref}\), and velocities \(\frac{-\Delta U}{2}\) and \(\frac{\Delta U}{2}\) respectively. All parameters are given in Table \ref{tab:FCValues}. We perform simulations for ten pressure values: \(0.1\, \si{Pa}\), \(0.26\, \si{Pa}\), \(0.66\, \si{Pa}\), \(1.71\, \si{Pa}\), \(4.41\, \si{Pa}\), \(11.35\, \si{Pa}\), \(29.24\, \si{Pa}\), \(75.33\, \si{Pa}\), \(194.08\, \si{Pa}\) and \(500\, \si{Pa}\). This yields Knudsen numbers between \(10^2\) and \(10^{-2}\). Below, we investigate the theoretical temperature profile and heat flux for the Fourier flow, and the theoretical velocity profile and shear stress for the Couette flow. After, we compare with numerical simulations of the BGK models and DSMC.  

\textbf{Continuum regime:} The temperature profile for the Fourier flow, and the velocity profile for the Couette flow are \cite{gallis2011} \begin{align}
	T_c &= \sqrt{T_1^2 + \left(T_2^2 - T_1^2\right)\frac{x}{L}}, \label{eq:Tc} \\
	U_c &= \Delta U \left(\frac{x}{L} - \frac{1}{2}\right). \label{eq:Uc}
\end{align} Then, the shear stress, which is the \((1, 2)\)-element of the stress tensor \(\Stensor\), is given by \begin{align} 
	\tau_{xy, c} &= -\mu \left(\partial_x U_y + \partial_y U_x \right) = -\mu \frac{\Delta U}{L}.
\end{align} For the Fourier flow, the heat flux is given by \begin{align}
	q_{x, c} &= -\frac{5 + \delta}{2} \mu \left(1 - \nu + \theta \nu\right) R_s \nabla_x T(x) = -\frac{5 + \delta}{2} \mu \left(1 - \nu + \theta \nu\right) R_s \frac{\left(T_2^2 - T_1^2\right)}{2L\sqrt{T_1^2 + \left(T_2^2 - T_1^2\right)\frac{x}{L}}}. \label{eq:heatFluxC}
\end{align} When plotting the heat flux, we will do so at \(x = \frac{L}{2}\). 

\textbf{Free molecular regime:} The distribution function of the gas inside the channel is given by two half Maxwellians propagating from the left and right wall \begin{align}
	f(x, v, t, I) = \begin{cases}
		\mathcal{M}(\rho_1, U_1, T_1) \quad \text{if } v_1 > 0, \\
		\mathcal{M}(\rho_2, U_2, T_2) \quad \text{if } v_1 < 0, 
	\end{cases} \label{eq:FMDistribution}
\end{align} where \(\mathcal{M}(\rho, U, T)\) is the \(\mathcal{M}\) is the Maxwellian distribution with density \(\rho\), mean velocity \(U\) and temperature \(T\). At the left and right wall, we have that the flux across the wall should be zero, therefore, \begin{align}
	\int_{v_x > 0} v \cdot n  \mathcal{M}(\rho_1, U_1, T_1) dv dI + \int_{v_x < 0} v \cdot n  \mathcal{M}(\rho_2, U_2, T_2) dv dI = 0,
\end{align} which simplifies to, \begin{align}
	\rho_1 \sqrt{T_1} = \rho_2 \sqrt{T_2}. \label{eq:FMCondition}
\end{align} The theoretical temperature and velocity profile for the Fourier and Couette flow are \cite{gallis2011} \begin{align}
	T_{fm} &= \sqrt{T_1 T_2}, \label{eq:fmT} \\
	U_{fm} &= 0 \label{eq:fmU}.
\end{align} Using Equation \eqref{eq:FMDistribution} and \eqref{eq:FMCondition}, we find for the shear stress \begin{align}
	\tau_{xy, fm} &= \int_{\mathbb{R}^3 \cross \mathbb{R}^+} v_1 v_2 f(t, x, v, I) dv dI = \frac{-p \Delta U}{\sqrt{2 \pi R_s T}}. 
\end{align} For the heat flux in the horizontal direction, we find \begin{align}
	q_{x, fm} &= \int_{\mathbb{R}^3 \cross \mathbb{R}^+} v_1 \left(\frac{|v|^2}{2} + I^{2/\delta}\right) f(t, x, v, I) dv dI = \sqrt{\frac{2 R_s T_1}{\pi}} p \left(\frac{\delta}{4}+1\right)\left(1 - \frac{T_r}{T_l}\right). \label{eq:heatFluxFM}
\end{align}
We perform the Fourier and Couette flow simulation for all ten pressure values with the BGK, ESBGK and polyatomic ESBGK models. The simulations are performed with \(210\) grid points in the horizontal direction and CFL number \(0.45\). The velocity variable is discretised on \([-6 \sqrt{R_s T_2}, 6 \sqrt{R_s T_2}]^2\) with \(N_v^2 = 40^2\) points. Initially, the gas is stationary and in thermodynamic equilibrium with temperature \(\left( T_1 + T_2 \right)/2\) and a constant pressure profile. We perform time-dependent simulations until equilibrium has been reached. We compare the results due to the BGK models with DSMC simulations. In Figure \ref{fig:CouetteU}, the velocity profile for the Couette flow is plotted for several pressure values. All models yield profiles that match the theoretical values in the continuum regime \eqref{eq:Uc} and free molecular regime \eqref{eq:fmU} because those results are independent of the collision model. Also for intermediate values of the pressure, all models yield very similar velocity profiles to DSMC. The departure from linearity in the continuum limit, is due to temperature-dependent viscosity \cite{gallis2011}. The temperature profile for the Fourier flow case is plotted in Figure \ref{fig:FourierT}. The theoretical limits were derived in \eqref{eq:Tc} and \eqref{eq:fmT}. As expected, all models yield the same results in the continuum limit and in the free molecular regime. The departure from linearity in the continuum limit, is due to temperature-dependent heat conduction \cite{gallis2011}. For intermediate pressures, the standard BGK model gives the worst results. This is because the relaxation time in the BGK model was chosen to match the viscosity coefficient in the continuum regime. The ESBGK and diatomic ESBGK model yield results close to DSMC, although for pressure \(0.441\, \si{Pa}\), we do observe a significant difference with the DSMC result. This is to be expected, as for this pressure, the gas is in the transition regime, and the BGK approximation is known to provide a crude approximation. In conclusion, the velocity profile for the Couette flow is well approximated by all three BGK models. It is more difficult to obtain a temperature field in the Fourier flow close to that of DSMC, however, the ESBGK model and diatomic ESBGK obtain good results.
\begin{figure}[htbp]
	\centering
	\begin{subfigure}{0.49\textwidth}
		\centering
		\includegraphics[width=\linewidth]{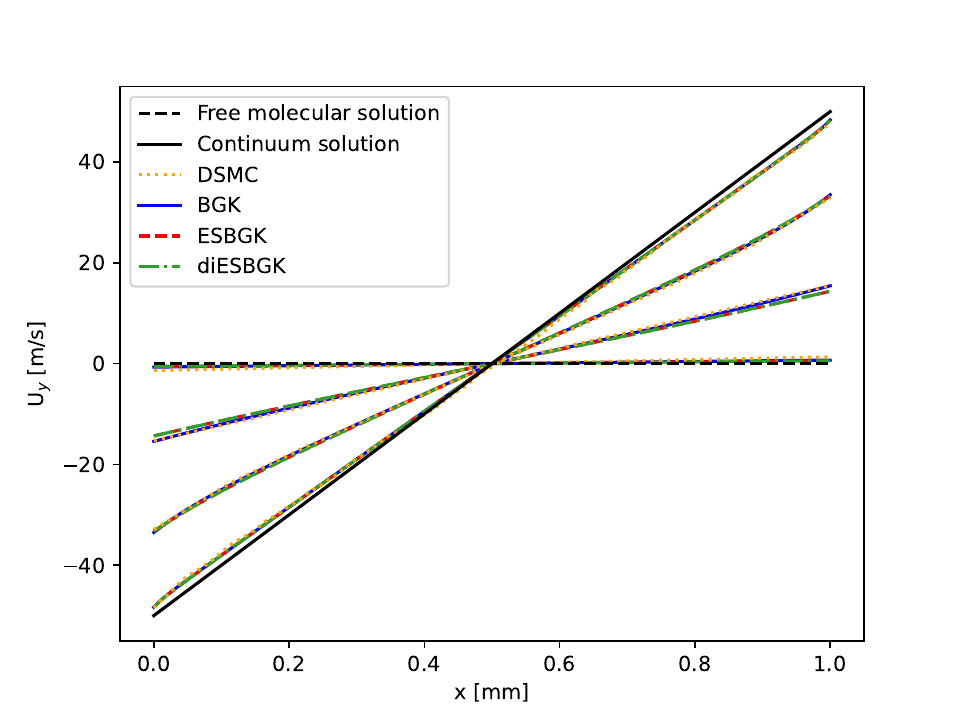}
		\caption{Velocity profile for the Couette flow.}
		\label{fig:CouetteU}
	\end{subfigure}
	\hfill
	\begin{subfigure}{0.49\textwidth}
		\centering
		\includegraphics[width=\linewidth]{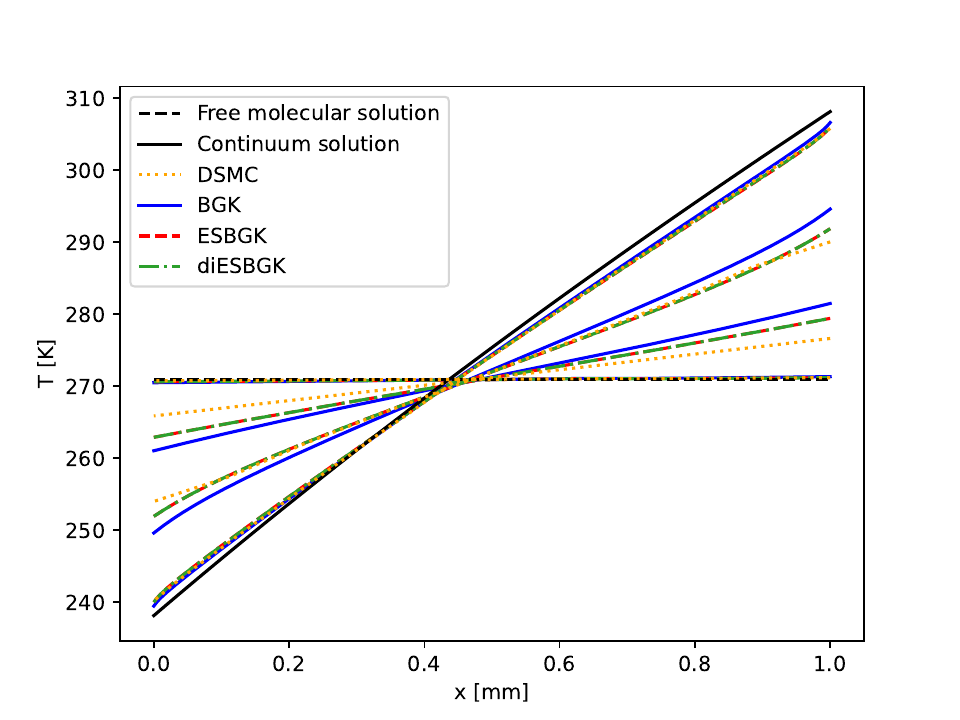}
		\caption{Temperature profile for the Fourier flow.}
		\label{fig:FourierT}
	\end{subfigure}
	\caption{Temperature and velocity along a horizontal slice for several pressure values \(0.1\,\si{Pa}, 4.41\,\si{Pa}, 29.24\,\si{Pa} \) and \(500\,\si{Pa}\) (horizontal line to line with positive slope). The associated Knudsen numbers are \(2 \times 10^{2}\), \(4.6 \times 10^{0}\), \(7.0 \times 10^{-1}\) and \(4.1 \times 10^{-2}\).}
	\label{fig:FCTempVelocity}
\end{figure}

In Figure \ref{fig:shearStress}, the shear stress for the Couette flow is plotted as a function of the pressure. All three BGK models obtain the same result by design, which also agrees well with the DSMC result. In Figure \ref{fig:heatFlux}, the heat flux is plotted for the Fourier flow. We have included the theoretical limits of the heat flux for a monoatomic gas and a polyatomic gas, which were derived in Equation \eqref{eq:heatFluxC} and \eqref{eq:heatFluxFM}. We see that the ESBGK and polyatomic ESBGK models approach the correct values in the continuum and free molecular limit. The BGK model does not approach the correct value in the continuum limit due to its limitation that the Prandtl number is fixed to one. It can either produce the correct shear stress, or the correct heat conduction, but not both. The polyatomic ESBGK result agrees well with the DSMC result. This test also shows the necessity of a polyatomic model when the heat conduction is an important quantity of interest. 
\begin{figure}[htbp]
	\centering
	\begin{subfigure}{0.49\textwidth}
		\centering
		\includegraphics[width=\linewidth]{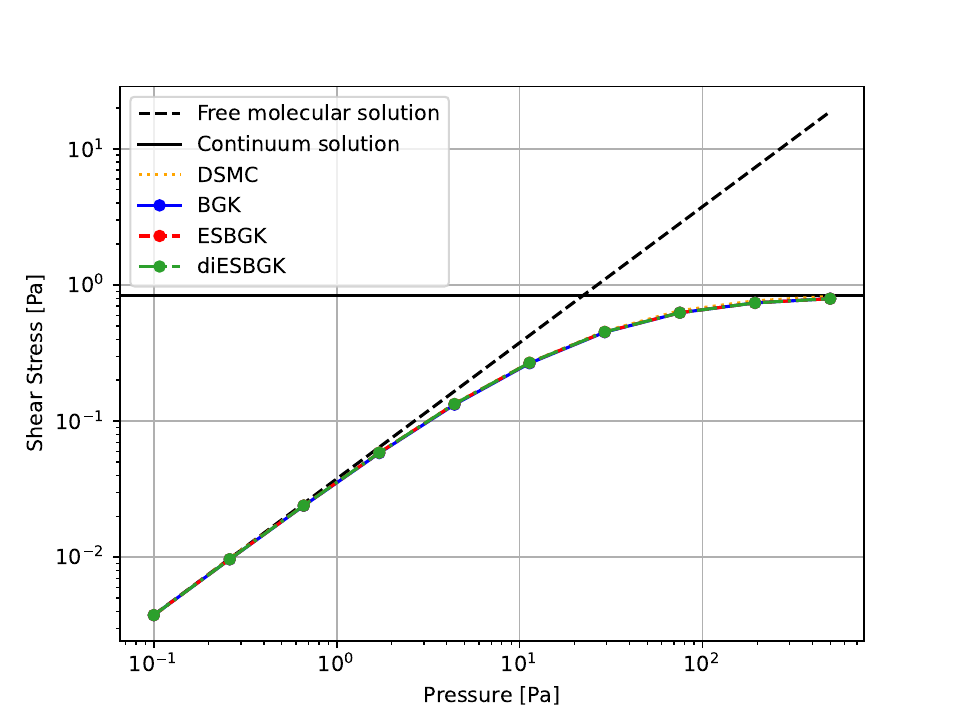}	
		\caption{Shear stress for the Couette flow.}
		\label{fig:shearStress}
	\end{subfigure}
	\hfill
	\begin{subfigure}{0.49\textwidth}
		\centering
		\includegraphics[width=\linewidth]{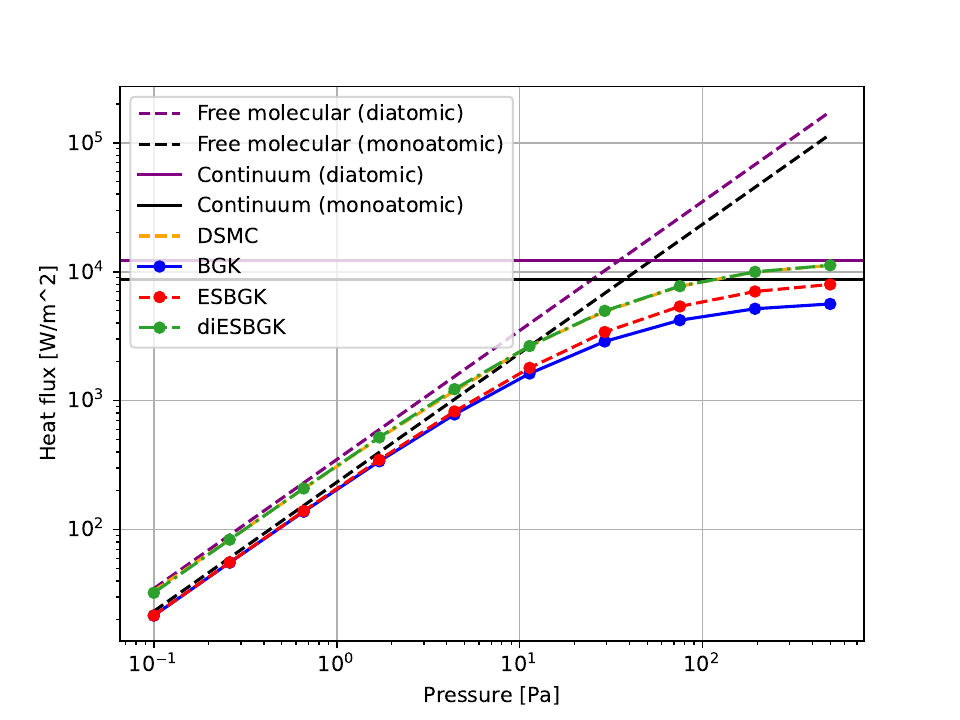}
		\caption{Heat flux for the Fourier flow.}
		\label{fig:heatFlux}
	\end{subfigure}
	\caption{Heat flux and shear stress for the Fourier and Couette flow as a function of the pressure.}
	\label{fig:HeatShearStress}
\end{figure}

\begin{remark}
	The Fourier and Couette flow simulations were carried out for multiple values of $\theta$ using the polyatomic ESBGK model. Since no significant differences were observed, we use $\theta = 1/174$ for the remainder of the simulations.
\end{remark}

\subsection{Orifice simulation}
\label{section:orifice}
\begin{figure}[t]
	\centering
	
	% Adjustable geometry parameters
	\newcommand{\LOriface}{10}   % total length
	\newcommand{\HOriface}{4}    % channel height
	\newcommand{\dO}{1.2} % orifice diameter
	\newcommand{\wO}{0.6} % orifice thickness
	
	\begin{tikzpicture}[
		>=Stealth,
		thick,
		line cap=rect,
		line join=round,
		font=\small
		]
		
		% =========================
		% Outer channel
		% =========================
		\draw (0,0) rectangle (\LOriface,\HOriface);
		
		% =========================
		% Orifice block
		% =========================
		\filldraw[white]
		({\LOriface/2 - \wO/2},{(\HOriface-\dO)/2})
		rectangle
		({\LOriface/2 + \wO/2},{(\HOriface+\dO)/2});
		
		% Oriface
		
		\fill[black, opacity=0.8] ({\LOriface/2 - \wO/2},0) rectangle ({\LOriface/2 + \wO/2},{(\HOriface-\dO)/2});
		
		\fill[black, opacity=0.8] ({\LOriface/2 - \wO/2},{(\HOriface+\dO)/2}) rectangle ({\LOriface/2 + \wO/2},\HOriface);
		
		% =========================
		% Dimension arrows
		% =========================
		
		% Length L
		\draw[<->] (0,\HOriface+0.5) -- (\LOriface,\HOriface+0.5);
		\node at (\LOriface/2,\HOriface+0.75) {$L$};
		
		% Height H (left)
		\draw[<->] (-0.9,0) -- (-0.9,\HOriface);
		\node[left] at (-0.9,{\HOriface/2}) {$H$};
		
		% Orifice diameter d
		\draw[<->] ({\LOriface/2 + 1.0},{(\HOriface-\dO)/2})
		-- ({\LOriface/2 + 1.0},{(\HOriface+\dO)/2});
		
		\node[right] at ({\LOriface/2 + 1.0},{\HOriface/2}) {$d$};
		
		% Orifice diameter d
		\draw[<->] ({\LOriface/2 + 1.0},{(\HOriface-\dO)/2}) -- ({\LOriface/2 + 1.0},{(\HOriface+\dO)/2}); 
		
		\node[right] at ({\LOriface/2 + 1.0},{\HOriface/2}) {$d$};
		
		% Oriface height
		\draw[<->] ({\LOriface/2 + \wO/2 + 0.1},0) -- ({\LOriface/2 + \wO/2 + 0.1},{(\HOriface-\dO)/2}); 
		
		\node[right] at ({\LOriface/2 + \wO/2 + 0.1},{(\HOriface-\dO)/4}) {$o_h$};
		
		% Oriface width
		\draw[<->] ({\LOriface/2 - \wO/2},{(\HOriface-\dO)/2 + 0.1}) -- ({\LOriface/2 + \wO/2},{(\HOriface-\dO)/2 + 0.1}); 
		
		\node[right] at ({\LOriface/2 - \wO/2},{(\HOriface-\dO)/2 + 0.3}) {$o_w$};
		
		% =========================
		% Boundary labels
		% =========================
		\node[left, align=right] at (-0.05,{0.65*\HOriface}) {$T_{w}$};
		\node[left, align=right] at (-0.05,{0.35*\HOriface}) {$p_{in}$};
		
		\node[right] at (\LOriface+0.1,{\HOriface/2}) {$p_o$};
		
	\end{tikzpicture}
	
	\caption{Basic structure and boundary conditions for the orifice flow.}
	\label{fig:Oriface}
\end{figure}

\begin{table}[htb]
	\centering
	\begin{tabular}{||c c c c c c c c||}
		\hline
		$\Delta x$ & $\Delta y$ & $L$ & $H$ & $o_w$ & $o_h$ & \(d/H\) & $T_w$ \\
		\hline\hline
		\(6.1350 \times 10^{-1} \, \si{\mu m} \) & \(6.0241 \times 10^{-1} \, \si{\mu m}\) & \(0.5\, \si{mm} \)  & \(0.1\, \si{mm}\) & \( 12 \Delta x\) & \( 71 \Delta y\) & \(14.46 \%\) & \(300 \, \si{K} \) \\
		\hline
	\end{tabular}
	\caption{Simulation parameters for the Fourier and Couette flow simulations. The quantities \(\Delta x\) and \(\Delta y\) are the grid size of the Cartesian grid for the semi-Lagrangian method.}
	\label{tab:OrifaceValues}
\end{table}

We consider a rectangular 2D domain \(\left[0, L\right] \cross \left[0, H\right] \) with an orifice of width \(o_w\) and height \(o_h\), similar to \cite{wang2004}. Hydrogen gas flows in the domain from the left with pressure \(p_{in}\), and temperature \(T_{w}\). The gas exists the domain on the right, where the pressure is set to \(p_o\). The top and bottom walls, as well as the walls of the orifice, have temperature \(T_{w}\). To reduce the computational time, we impose a specular boundary condition along the horizontal centreline of the geometry and only simulate the bottom half of the geometry. All relevant simulation parameters are given in table \ref{tab:OrifaceValues}. We perform the simulation with two outflow pressures \(p_{o}\), \(50 \, \si{kPa}\) and \(2.5 \, \si{kPa}\). For both outflow pressures, we perform the simulation with four inflow pressures: \(2 p_{o}\), \(7p_o/4\), \(6p_o/4\), \(5p_o/4 \). For the high and low outflow pressure and \(p_{in}=2p_{o}\), we obtain a Knudsen number of \(4 \times 10^{-3}\) and \(9 \times 10^{-2}\) at the outflow boundary. At the orifice, the Knudsen number goes up to \(3 \times 10^{-2}\) and \(6 \times 10^{-1}\). In all simulations, the CFL number is set to 0.49. The simulations are run until time \(2 \times 10^{-6}\), at which point the solution as converged to a steady state. The translational temperature, mean velocity and pressure along centreline of the orifice for \(p_{o} = 50 \, \si{kPa}\) are plotted in figure \ref{fig:orifice1}.

Before the orifice, the temperature, pressure and velocity are nearly constant. At the orifice, the pressure decreases sharply, overshoots the outflow pressure and then slowly reaches outflow pressure. Due to the constriction, the gas accelerates through the orifice, after which the gas purges into the chamber behind the orifice. Due to the increased velocity at the orifice, some of the internal energy of the gas is converted to kinetic energy. As a result, the translational temperature of the gas decreases at the orifice. The decrease in pressure and temperature, and the increase in velocity, increases with the inflow pressure. We also observe a recirculation of the flow behind the orifice, see figure \ref{fig:orificeQuiver1}. The BGK models agree well pressure and velocity profile obtained with starccm. We attribute the difference in the temperature profile to the different model (BGK-type vs Navier-Stokes), different grid (Cartesian vs unstructured and locally refined) and the method (higher-order vs first-order). Although, it is worth noting that of all three BGK models, the polyatomic ESBGK model yields results closest to the starcmm profile. 
\begin{figure}
	\centering
	\begin{subfigure}[b]{0.49\textwidth}
		\centering
		\includegraphics[width=\linewidth]{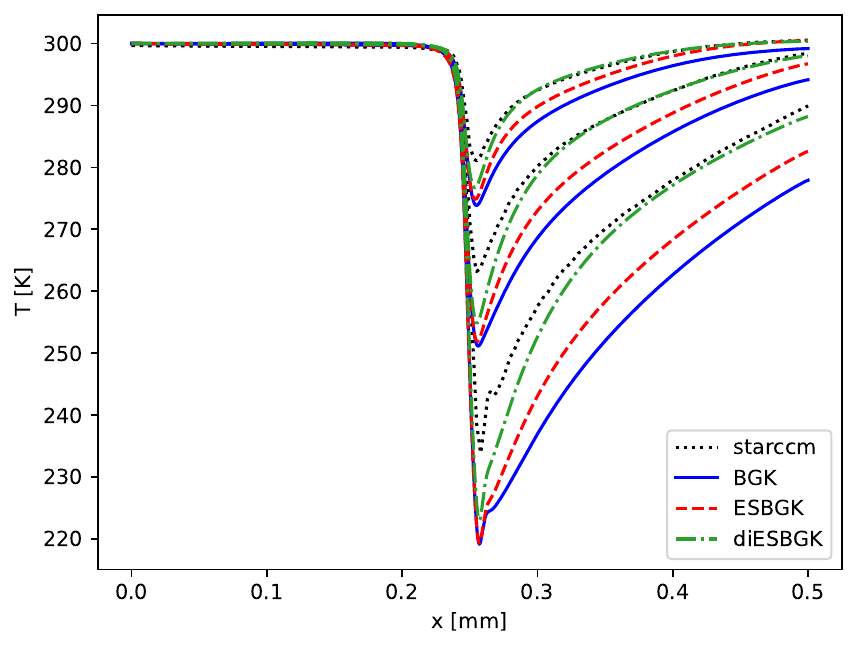}
		\caption{Translational temperature \(T_{tr}\).}
		\label{fig:OrificeTemperature1}
	\end{subfigure}
	\begin{subfigure}[b]{0.49\textwidth}
		\centering
		\includegraphics[width=\linewidth]{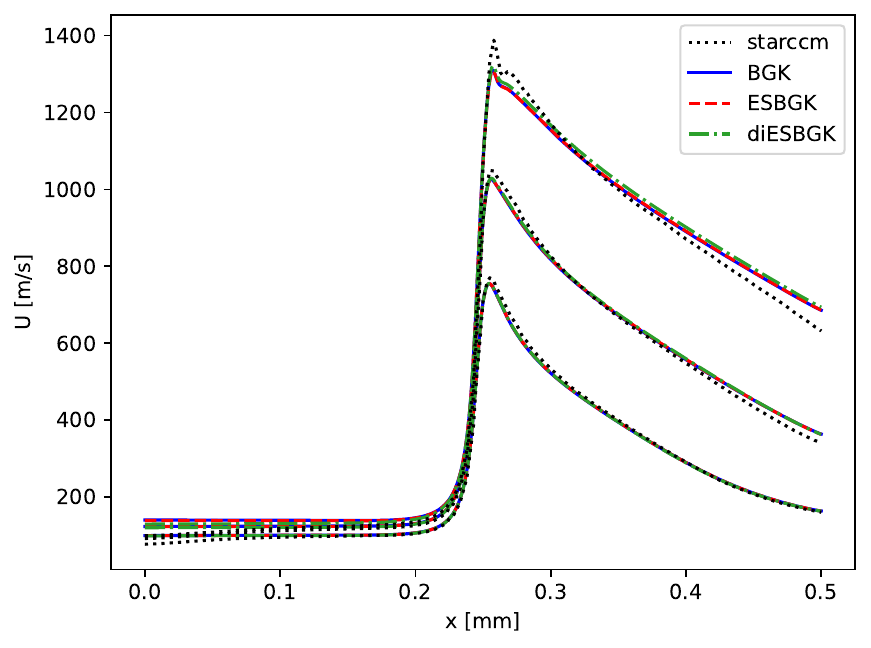}
		\caption{Mean velocity \(U\).}
		\label{fig:OrificeVelocity1}
	\end{subfigure}
	\begin{subfigure}[b]{0.49\textwidth}
		\centering
		\includegraphics[width=\linewidth]{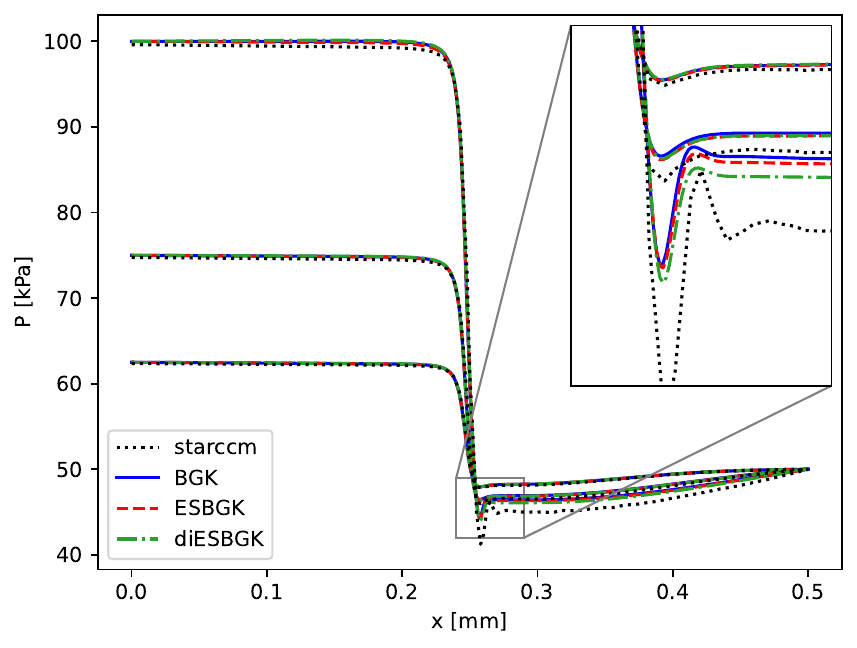}
		\caption{Pressure \(p = \rho R_s T_{tr}\).}
		\label{fig:OrificePressure1}
	\end{subfigure}
	\caption{The orifice simulation for outflow pressure \(p_o = 50 \, \si{kPa}\), and inflow pressures \(100 \, \si{kPa}\), \(75 \, \si{kPa}\) and \(62.5 \, \si{kPa}\).}
	\label{fig:orifice1}
\end{figure}

\begin{figure}
	\centering
	\includegraphics[width=\linewidth]{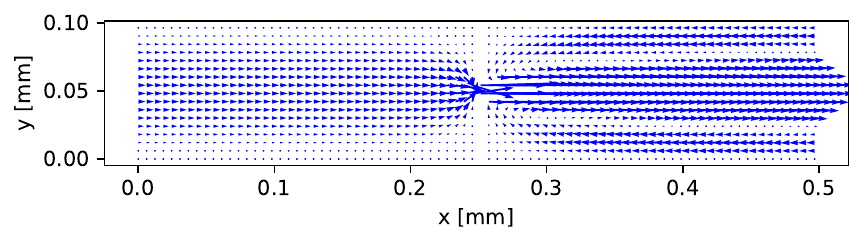}
	\caption{The horizontal velocity of the diatomic ESBGK model for \(p_o = 50 \, \si{kPa}\) and \(p_{in} = 2 p_o\). A vortex is observed behind the orifice.}
	\label{fig:orificeQuiver1}
\end{figure}

The translational temperature, mean velocity and pressure along centreline of the orifice for \(p_{o} = 2500\, \si{Pa}\) are plotted in figure \ref{fig:orifice2}. For this pressure, the gas is in the transition regime, and comparing with Navier-Stokes is fruitless. Instead, we compare our result to DSMC simulations. Compared to the case with large outflow, we immediately observe a more smooth pressure, temperature and mean velocity profiles, as one would expect from a larger Knudsen number flow. The pressure is well approximated by the BGK models. The BGK models overestimate the velocity along the centre line slightly. In the temperature plot, we observe significant statistical noise in the DSMC simulations. For the lower pressure simulations, we do not observe the recirculation in these simulation as is observed for the larger pressure in figure \ref{fig:orificeQuiver1}.
\begin{figure}
	\centering
	\begin{subfigure}[b]{0.49\textwidth}
		\centering
		\includegraphics[width=\linewidth]{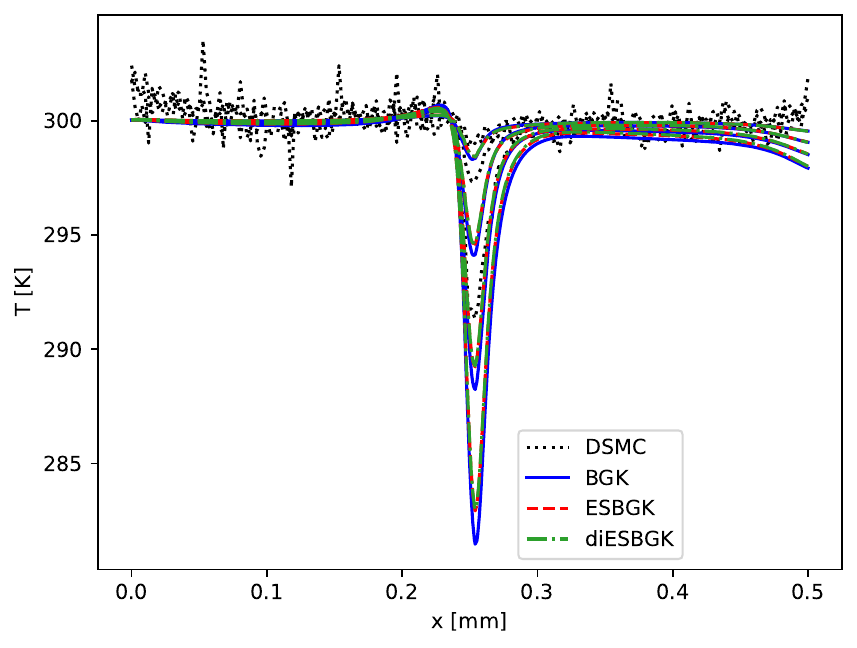}
		\caption{Translational temperature \(T_{tr}\).}
		\label{fig:OrificeTemperature2}
	\end{subfigure}
	\begin{subfigure}[b]{0.49\textwidth}
		\centering
		\includegraphics[width=\linewidth]{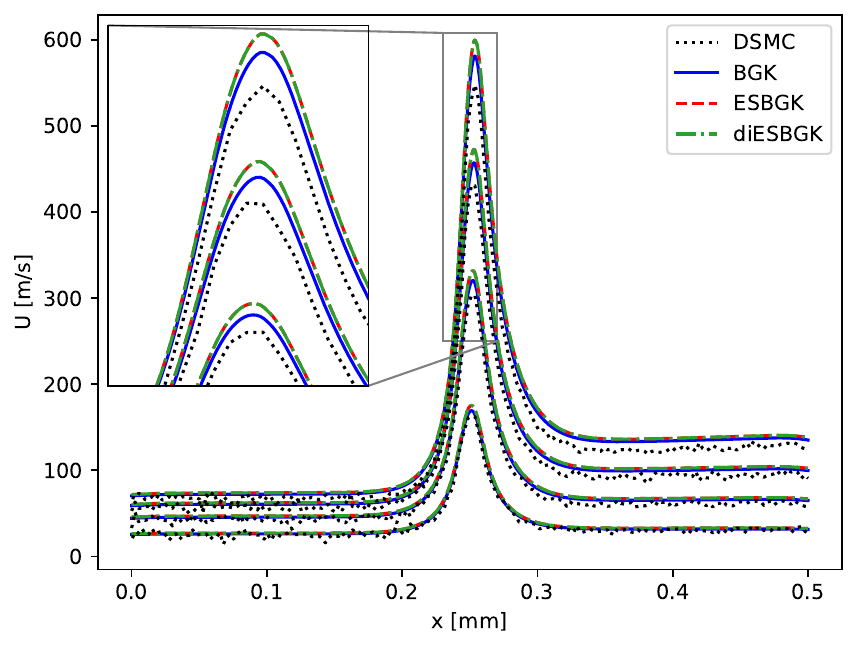}
		\caption{Mean velocity \(U\).}
		\label{fig:OrificeVelocity2}
	\end{subfigure}
	\begin{subfigure}[b]{0.49\textwidth}
		\centering
		\includegraphics[width=\linewidth]{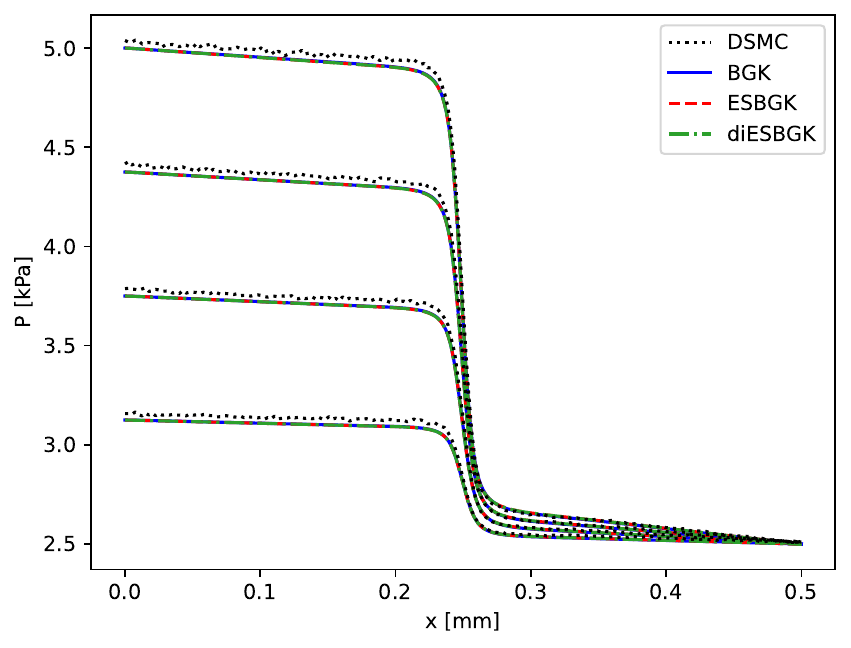}
		\caption{Pressure \(p = \rho R_s T_{tr}\).}
		\label{fig:OrificePressure2}
	\end{subfigure}
	\caption{The orifice simulation for outflow pressure \(p_o = 2500\, \si{Pa}\), and inflow pressures \(5000 \, \si{Pa}\), \(4375 \, \si{Pa}\), \(3750 \, \si{kPa}\) and \(3125 \, \si{Pa}\).}
	\label{fig:orifice2}
\end{figure}

In figure \ref{fig:orificeMassFlowRate}, the mass flow rate is plotted as a function of the pressure for the two outflow pressures. In both cases, all three BGK models yield very similar results and agree with the DSMC/Navier-Stokes simulations.
\begin{figure}
	\centering
	\begin{subfigure}[b]{0.49\textwidth}
		\centering
		\includegraphics[width=\linewidth]{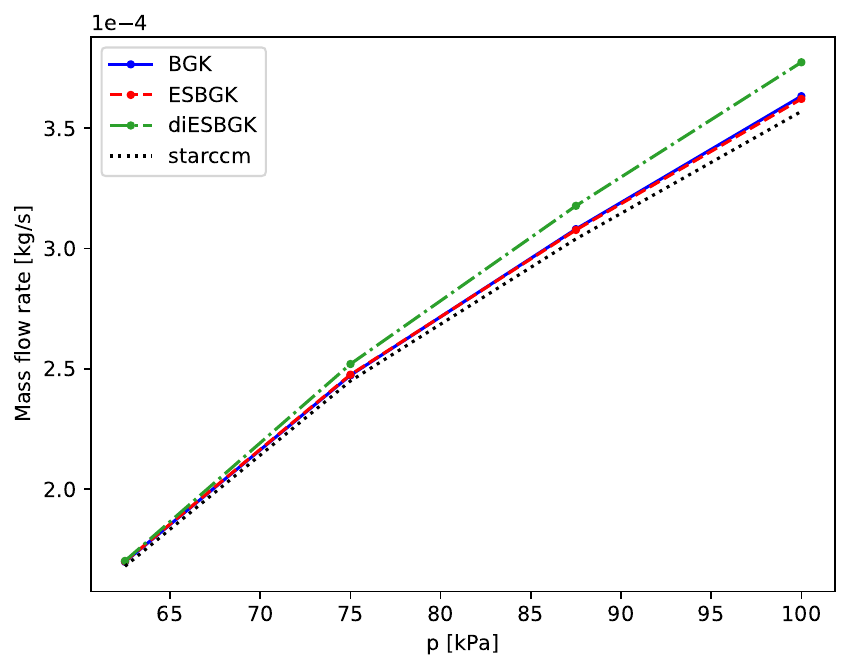}
		\caption{Outflow pressure \(p_o = 50000\, \si{Pa}. \)}
		\label{fig:OrificeMassFlowRate1}
	\end{subfigure}
	\begin{subfigure}[b]{0.49\textwidth}
		\centering
		\includegraphics[width=\linewidth]{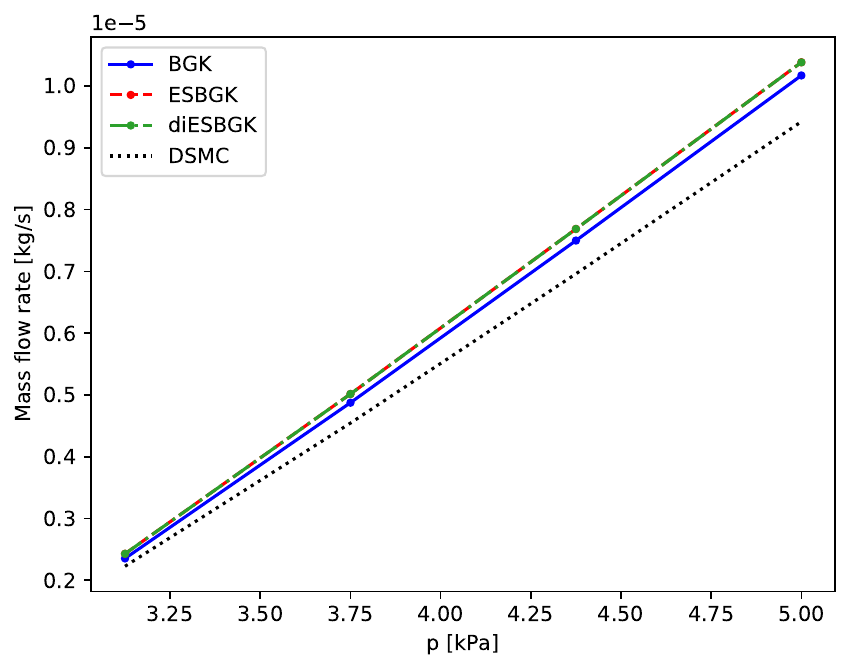}
		\caption{Outflow pressure \(p_o = 2500\, \si{Pa}. \).}
		\label{fig:OrificeMassFlowRate2}
	\end{subfigure}
	\caption{}
	\label{fig:orificeMassFlowRate}
\end{figure}
\clearpage

\subsection{Moving orifice simulation}
\label{section:movingOrifice}
We consider the same orifice boundary as before, see figure \ref{fig:Oriface}, but make the height of the oriface time-dependent, \begin{align}
	o_h(t) = \begin{cases}
		2 \Delta y \text{ if } t < 2 \, \si{\mu s} \\
		2 \Delta y + \frac{H}{2} \frac{1}{3} + \frac{H}{2} \frac{1}{3} \sin \left(  \frac{2\pi}{3 \times 10^{-6}}(t - 2 \times 10^{-6}) + \frac{3\pi}{2} \right) \text{ if  } t \geq 2 \, \si{\mu s}. 
	\end{cases}
\end{align} We set the inflow pressure to \(p_{in} = 2 p_o\), and we set the outflow pressure to \(p_o = 50 000\, \si{Pa}\). The simulation is performed using the polyatomic ESBGK model up to time \(t = 1 \times 10^{-6}\). The moving boundaries are implemented using a rudimentary immersed boundary method: the solid wall advances by `snapping' it to the closest grid cell. 

We plot the pressure and horizontal velocity at several time steps in figure \ref{fig:movingOrifice}. At first, the orifice height is small compared to the size of the domain \(H\). The flow then effectively behaves like a pressure-driven Poiseuille flow, see for example \cite{meng2013, wang2004}. In this regime, for the large pressure, the horizontal velocity along a vertical slice has a parabolic profile, the pressure along the centreline decreases linearly, and the velocity along the centreline increases linearly. As the pressure decreases, slip at the boundaries and nonlinear pressure and velocity profiles indicate departure from continuum conditions. As the orifice height increases, the geometry changes to two chambers connected by a small channel. The gas then expands into the chamber behind the orifice, potentially causing recirculation behind the orifice. In Figure \ref{fig:movingOrificeTemperature3}, we observe shock diamonds behind the orifice. 

\begin{figure}	
	\centering
	\begin{subfigure}{0.45\textwidth}
		\centering
		\includegraphics[width=\linewidth]{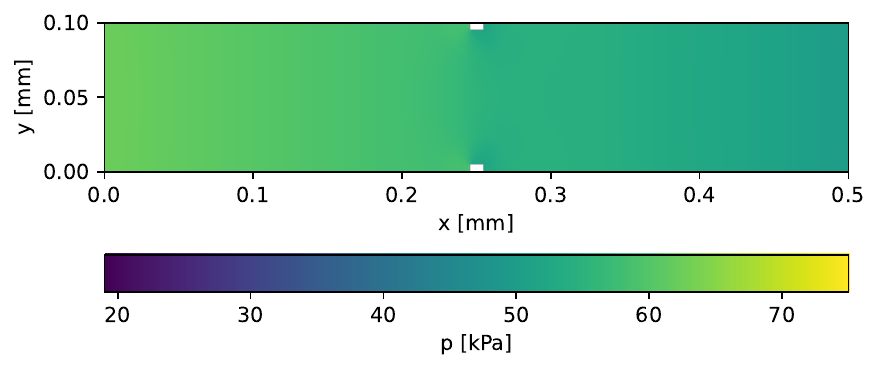}
		\caption{}
		\label{fig:movingOrificeTemperature1}
	\end{subfigure}
	\hfill
	\begin{subfigure}{0.45\textwidth}
		\centering
		\includegraphics[width=\linewidth]{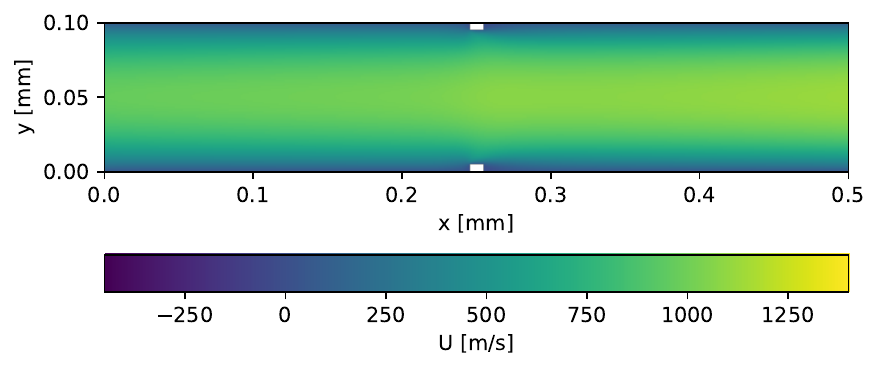}
		\caption{}
		\label{fig:movingOrificeTemperature2}
	\end{subfigure}
	
	\vspace{0.5em}
	
	\begin{subfigure}{0.45\textwidth}
		\centering
		\includegraphics[width=\linewidth]{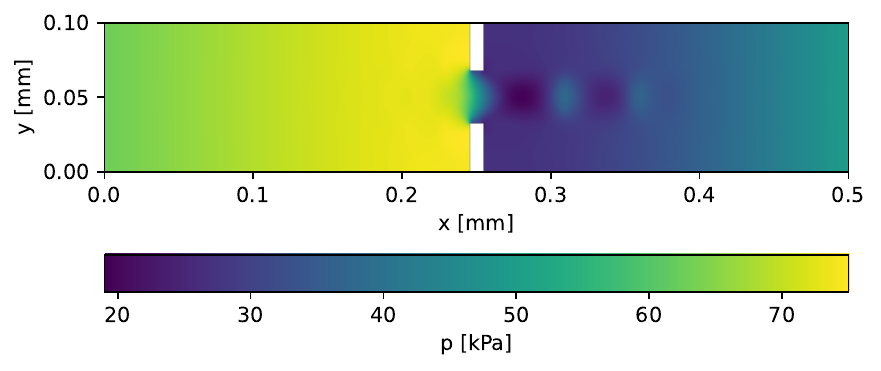}
		\caption{}
		\label{fig:movingOrificeTemperature3}
	\end{subfigure}
	\hfill
	\begin{subfigure}{0.45\textwidth}
		\centering
		\includegraphics[width=\linewidth]{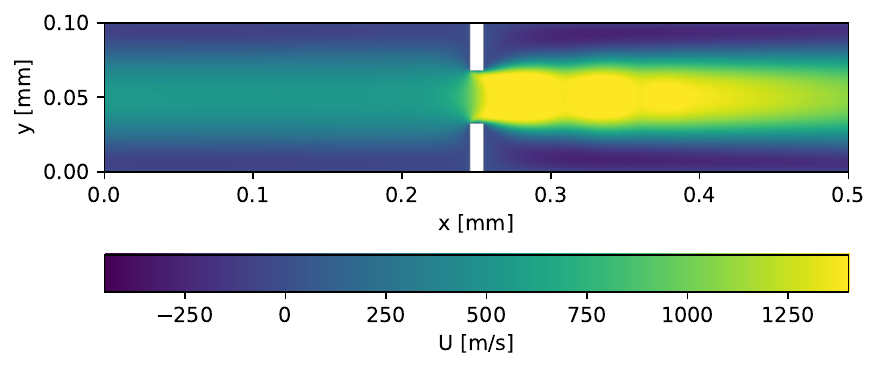}
		\caption{}
		\label{fig:movingOrificeTemperature4}
	\end{subfigure}
	
	\vspace{0.5em}
	
	\begin{subfigure}{0.45\textwidth}
		\centering
		\includegraphics[width=\linewidth]{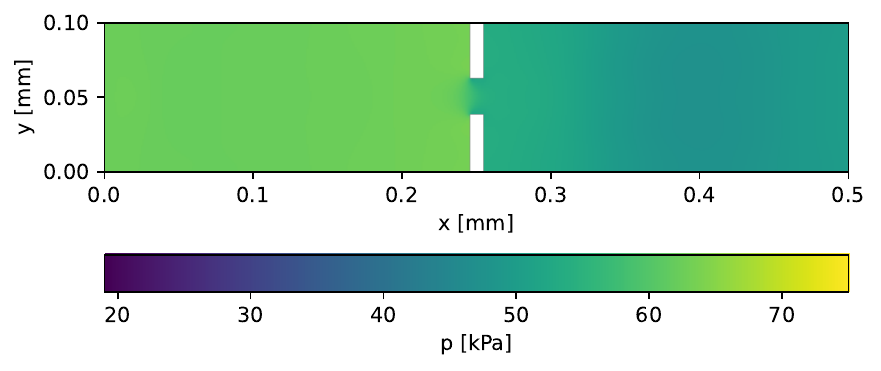}
		\caption{}
		\label{fig:movingOrificeTemperature5}
	\end{subfigure}
	\hfill
	\begin{subfigure}{0.45\textwidth}
		\centering
		\includegraphics[width=\linewidth]{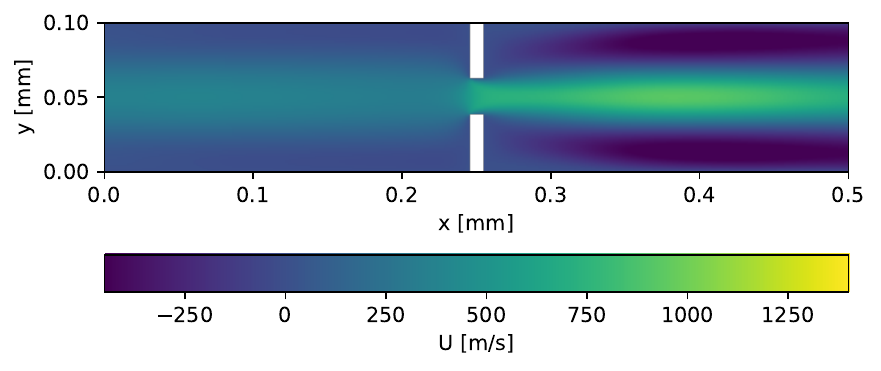}
		\caption{}
		\label{fig:movingOrificeTemperature6}
	\end{subfigure}
	
	\caption{The pressure (left) and the horizontal velocity (right) for the moving orifice simulation at times \(3.11372 \times 10^{-6}\, \si{s}\) (top), \(4.10444 \times 10^{-6}\, \si{s} \) (centre) and \(4.52904 \times 10^{-6}\, \si{s}\) (bottom).}
	\label{fig:movingOrifice}
\end{figure}

\section{Conclusion}
\label{section:Conclusion}
In this work, we developed a semi-Lagrangian numerical scheme for the polyatomic Ellipsoidal Statistical BGK (ESBGK) model of the Boltzmann equation. By combining a characteristic-based treatment of the transport term with an implicit A-stable linear multistep discretization of the relaxation operator, the proposed method eliminates the restrictive time-step constraint typically associated with kinetic transport while efficiently handling the stiffness of the collision term. Owing to the structure of the BGK operator, the implicit formulation can be reformulated into a computationally inexpensive update, resulting in a numerically stable and efficient scheme.

The method is asymptotic preserving and stiffly accurate, ensuring that in the vanishing Knudsen number limit the scheme naturally transitions to a consistent discretization of the Euler equations. Moreover, we established that the first-order version of the scheme asymptotically converges to the compressible Navier–Stokes equations with the correct transport coefficients. Suitable inflow and outflow boundary conditions for BGK-type kinetic models were also introduced, enabling the treatment of open-domain problems.

Numerical experiments, including Fourier and Couette flow tests, demonstrated good agreement between the ESBGK model and Direct Simulation Monte Carlo (DSMC) results, confirming the physical fidelity of the approach. Finally, the method was successfully applied to a challenging orifice flow with moving boundaries, illustrating its robustness and flexibility for complex rarefied gas dynamics problems. These results indicate that the proposed scheme provides an effective deterministic framework for simulating polyatomic kinetic models across a wide range of flow regimes.

\textbf{Acknowledgments: } This work was supported by the European Union’s Framework Program for Research and Innovation Horizon Europe under the Marie~Skłodowska-Curie Doctoral Networks action (HORIZON-MSCA-2021-DN-01), Grant Agreement No.~101072546 (DATAHYKING). 

\newpage
\printbibliography

\end{document}